\theoremstyle{plain}
\newtheorem{theorem}{Theorem}[section]
\newtheorem{corollary}[theorem]{Corollary}
\newtheorem{proposition}[theorem]{Proposition}
\theoremstyle{definition}
\newtheorem{definition}[theorem]{Definition}
\numberwithin{equation}{section}
\numberwithin{table}{section}
\newcommand{\C}{\mathbb{C}}
\newcommand{\D}{\mathbb{D}}
\newcommand{\R}{\mathbb{R}}
\newcommand{\T}{\mathbb{T}}
\begin{document}
\title{Moment estimates of the cloud of a planar measure}

\author{Mihai Putinar}
\address[M.~Putinar]{University of California at Santa Barbara, CA,
USA and Newcastle University, Newcastle upon Tyne, UK} 
\email{\tt mputinar@math.ucsb.edu, mihai.putinar@ncl.ac.uk}

\date{\today}

\keywords{Polynomial approximation, moment problem, trace formula, principal function, Hessenberg matrix, Christoffel-Darboux kernel}

\subjclass[2010]{30E05, 47B20, 42C05, 47N30}

\begin{abstract} With a proper function theoretic definition of the {\it cloud} of a positive measure with compact support in the real plane, 
a computational scheme of transforming the moments of the original measure into the moments of the uniformly distributed mass on the cloud
is described. The main limiting operation involves exclusively  truncated Christoffel-Darboux kernels, while
error bounds depend on the spectral asymptotics of a Hankel kernel belonging to the Hilbert-Schmidt class.

\end{abstract}

\maketitle

\section{Introduction} This is a continuation of a recent work \cite{Putinar-2019}, adding to it some constructive analysis details. The purpose of both articles was/is to 
compute the moments of the ``cloud'' of a positive measure with compact support in the plane starting from the moments of the original measure.
This moment conversion eliminates the components of the measure which are singular with respect to Lebesgue area, informally regarded as ``outliers",
and charges the cloud with uniform mass. Two real dimensions are specific in the approximation scheme we propose, twofold: complex variables enter heavily into the picture,
as well as a refined spectral analysis for pairs of self-adjoint operators with trace-class commutator. 

The present article has two interlaced motivations: to identify a 2D analogue of the 
classical and still rapidly evolving spectral analysis on the line or the circle, based on moment information, involving orthogonal polynomials and Christofell-Darboux kernels, and second,
the current renewed interest of data analysis flavor, consisting in separating outliers from the cloud, in multidimensional point distributions. Our viewpoint is to treat moments as observed data and to extract from them by simple analytic transforms the moments of the cloud. We only mention without expanding in this work the well known, various reconstruction from moments methods.

A landmark contribution to polynomial approximation theory in the complex domain is Thomson's Theorem \cite{Thomson-1991}. It asserts that complex polynomials are either
dense in the Lebesgue space $L^2(\mu)$ associated to a positive Borel measure $\mu$ with compact support in the plane, or there exist $L^2(\mu)$-bounded point evaluations for polynomials.
More specifically, Thomson's Theorem provides a decomposition of the measure $\mu$ into a singular with respect to area part, where polynomials are dense, and measures which carry an open set
of point evaluations, see Theorem \ref{Thomson} below for the precise statement. The multiplier with the complex variable $S_\mu = M_z$, acting on the closure of polynomials $P^2(\mu)$, is consequently decomposed into a direct sum of a normal operator and a collections of irreducible subnormal, cyclic operators with a continuum of eigenvalues for their adjoints: $S_\mu = N \oplus S_1 \oplus S_2 \oplus \ldots. $ In very elementary terms, the main idea we exploit in this article is: the self-commutator 
$$[S_\mu^\ast, S_\mu] = 0 \oplus [S_1^\ast, S_1] \oplus [S_2^\ast, S_2] \oplus \ldots$$ does not ``see" the normal part, that is the outliers to the cloud.

The spectral analysis of the analytic Toeplitz operator $S_\mu$ is rich and provides the numerical procedure we propose to eliminate via traces of commutators its normal direct summand. 
A second notable result we invoke is
due to Berger and Shaw \cite{Berger-Shaw-1974}, namely the self-commutator $[S^\ast_\mu, S_\mu]$ is trace class. This opens a wide array of tools specific to the notion of principal function,
a refined spectral invariant extending the Fredholm index across the spectrum. A trace formula discovered in the 1970-ies by Helton and Howe and an equivalent determinant formula 
independently obtained by Carey and Pincus provide effective formulas linking the moments of the principal function (in our case the characteristic function of the cloud of the measure $\mu$) and traces of commutators of smooth functions applied to $S_\mu.$
The third deep result we rely on is due to Carey and Pincus, asserting that the principal function of a subnormal operator is integer valued \cite{Carey-Pincus-1981}.

This machinery was put in action in our previous article \cite{Putinar-2019} with a proposed computational/transformation scheme:
\bigskip

{\it moments of $\mu$ $ \longrightarrow$ \  moments of the uniform mass on the cloud of $\mu$.}
\bigskip

We expand below a quantitative analysis of the algorithm, with error bounds expressed in the entries of the Hessenberg matrix representing $S_\mu$ with respect to the filtration
of $P^2(\mu)$ by finite dimensional subspaces consisting of polynomials. The main transform involves solely limits of some explicit expressions involving truncated Christoffel-Darboux kernels of 
the original measure $\mu$. And not surprising, the principal error estimate depends on the asymptotics of the singular numbers of Hankel's operator $(I-P)M_z^\ast P$, where $P : L^2(\mu) \longrightarrow P^2(\mu)$ denotes the orthogonal projection. The approximation scheme can be adapted to other orthogonal function systems than polynomials, with an appropriate sequence of finite rank projections converging strongly to the identity operator. We remain at an all-inclusive level, applicable to {\it any} positive measure with compact support, leaving for future studies the adaptation of this general framework to concrete situations. The examples at the end of the article indicate a few openings in this direction.

To make the present note accessible to a larger group of readers, we (re)expose below the main ingredients, with precise reference to the sources. 
\bigskip

{\bf Acknowledgements.} We are grateful to Bernhard Beckermann for helpful comments on the asymptotics of the Hessenberg matrix associated to a uniform mass distribution
in the plane.

\section{Preliminaries} Throughout this note $\C[z]$ stands for the algebra of polynomials in one variable with complex coefficients.
Let $\mu$ be a positive Borel measure with compact support in the complex plane. We assume the support of $\mu$
is not finite. 

\subsection{Orthogonal polynomials} We recall a few facts and conventions referring to complex orthogonal polynomials, as for instance appearing in \cite{Stahl-Totik-1992}.
The closure of complex polynomials in $L^2(\mu)$ is denoted $P^2(\mu)$, with associated orthogonal projection
$P$. The multiplication $M$ by the complex variable $z \in \C$ is a bounded linear transform of $L^2(\mu)$
which leaves invariant the subspace $P^2(\mu)$:
$$ P M P = M P.$$
The linear operator $S_\mu = M|_{P^2(\mu)}: P^2(\mu) \longrightarrow P^2(\mu)$ is called {\it subnormal}. It is also a Toeplitz operator:
$$ (S_\mu f)(z) = P (w f(w))(z), \ \ f \in P^2(\mu).$$
Sometimes we simply write $S = S_\mu$.
The spectrum of $M$ coincides with the closed support ${\rm supp}(\mu)$ of the measure $\mu$, while the spectrum of $S$ can be larger,
containing in addition some connected components of $\C \setminus {\rm supp}(\mu)$.

The constant function $\mathbf{1}$ is a cyclic vector for $S$, producing the finite dimensional filtration (Krylov subspaces):
$$ \C_n[z] = \{ f \in \C[z],\  \deg f \leq n \} = {\rm span} \{ S^j \mathbf{1}, \ 0 \leq j \leq n \}.$$
We denote by $p_n(z)$ the associated complex orthogonal polynomials:
$$ \langle p_j, p_k \rangle = \int p_j \overline{p_k} d\mu = \delta_{jk}, \ \ \deg p_k =k, \ \ j,k \geq 0.$$
The operator $S$ has a distinguished Hessenberg matrix representation with respect to the orthonormal basis
$(p_n)_{n=0}^\infty$ of $P^2(\mu)$:
$$ \langle z p_j, p_k \rangle = h_{kj}, \ j,k \geq 0,$$
observing the automatic vanishing relations
$$ h_{jk} =0, \ j+1 < k.$$
The adjoint operator is represented by the matrix:
$$ \langle S^\ast p_j, p_k \rangle = \langle M^\ast p_j, p_k \rangle = \langle p_j , S p_k \rangle = \overline{h_{jk}}.$$
It is customary to normalize the leading coefficient of $p_n$ to be a positive number:
$$ p_n(z) = \gamma_n z^n + \ldots, \ \ \gamma_n > 0, \ \ n \geq 0.$$ The matrix representing $S$ has the form:
$$
H = \begin{bmatrix}
h_{00} & h_{01} & h_{02} & h_{03}& \ldots\\
h_{10}&h_{11}& h_{12} & h_{13} & \ldots\\
0& h_{21}&h_{22} &h_{23}& \\
0&0&h_{32}&h_{33}& \ldots\\
\vdots & & \ddots & \ddots 
\end{bmatrix}.
$$
If the measure $\mu$ has support on the real line, then $S$ is a self-adjoint operator, hence the matrix $H$ is symmetric, with real values, with
only three non-vanishing diagonals: a well charted Jacobi matrix framework.

For every non-negative integer $n$ one can speak of the reproducing kernel (known as the {\it Christoffel-Darboux kernel}) of the finite dimensional subspace $\C_n[z]$:
$$ K_n(z,w) = \sum_{j=0}^n p_j (z) \overline{p_j(w)},$$
characterized by the identity:
$$ \langle K_n(z,w), f(w) \rangle = 
\left\{ \begin{array}{cr}f(z), &f \in \C_n[z],\\
                       0, & {\rm deg} f >n. \end{array} \right. $$
The {\it Christoffel function} of order $n$ is
$$ \Lambda_n (z) = \frac{1}{K_n(z,z)} = \inf \{ \| f \|^2, \ f(z) =1, \ \deg f \leq n \}.$$
These numerical values decrease with $n$, and the limit 
$$ \Lambda^\mu (z) = \Lambda (z) = \inf_n  \Lambda_n (z) $$
is known as the Christoffel function associated to the measure $\mu$, evaluated at the point $z$.

Incidentally, the leading coefficient of the orthogonal polynomial $p_n$ has a similar variational interpretation:
$$ \frac{1}{\gamma_n} = \inf \{ \| z^n - f(z) \|, \ \deg f \leq n-1 \}, \ n \geq 1.$$

\subsection{Trace and determinant formulas} 

The analytic subnormal operator $S  = PMP$ satisfies the {\it hyponormal} commutator inequality
$ [S^\ast, S] \geq 0.$ Indeed, for any element $f \in P^2(\mu)$:
$$ \langle [S^\ast, S]f, f \rangle = \| S f\|^2 - \| S^\ast f\|^2 = $$ $$
\| w f(w) \|^2 - \| P \overline{w} f(w) \|^2 \geq  \| w f(w) \|^2 - \| \overline{w}f(w) \|^2 = 0.$$
A theorem of Berger and Shaw \cite{Berger-Shaw-1974} asserts that the above commutator is trace-class: ${\rm Tr} [S^\ast,S] < \infty$. A different proof of Berger-Shaw Theorem is due to Voiculescu \cite{Voiculescu-1980'}, in an article putting this very trace bound in the context of general perturbation theory, with a specific link to the modulus of quasi-triangularity of a Hilbert space linear transform.

For any polynomials $p \in \C[z,\overline{z}]$, one can define an ordered functional calculus (traditionally called {\it hereditary functional calculus}) $p(S, S^\ast)$, by arranging all powers of $S^\ast$ to the left of the powers of $S$, in every monomial. Then for a pair of polynomials $p,q \in \C[z,\overline{z}]$ one proves by degree induction that the commutator $[p(S,S^\ast), q(S,S^\ast)]$ is trace-class. A remarkable 
observation due to Helton and Howe \cite{Helton-Howe-1975} asserts that the bilinear form
$$ (p,q) \mapsto {\rm Tr}[p(S,S^\ast), q(S,S^\ast)] $$
depends linearly, and it is continuous in the sense of distributions, on the Jacobian  
$$J(p,q) = \frac{\partial p}{\partial \overline{z}} \frac{\partial q}{\partial {z}} - \frac{\partial q}{\partial \overline{z}} \frac{\partial p}{\partial {z}}.$$
Further on, it was established by Carey and Pincus as a byproduct of a decade of groundbreaking discoveries \cite{Carey-Pincus-1981}  that there exists a Borel measurable set $\Sigma(\mu)$ satisfying
\begin{equation}\label{HH}
 {\rm Tr}[p(S,S^\ast), q(S,S^\ast)]  = \int_{\Sigma(\mu)} J(p,q) dA, \ \ p, q \in \C[z,\overline{z}].
\end{equation}
Above, $dA$ stands for Lebesgue measure in $\R^2$. 

\begin{definition} The {\it cloud} of a positive Boreal measure $\mu$ with compact support in $\C$ is the measurable set $\Sigma(\mu)$ appearing in trace formula (\ref{HH}).
\end{definition}

Note that the set $\Sigma(\mu)$ is only determined up to area null-sets. In practice it is more appropriate to speak about
the class $[\chi_{\Sigma(\mu)}]$ of its characteristic function in $L^1(\C,\mu)$.

The equivalent formulation as an infinite determinant formula goes back to the origins of Carey and Pincus work:
\begin{equation}\label{det}
\det [ (S-w) (S^\ast-\overline{z}) (S-w)^{-1}(S^\ast-\overline{z})^{-1}] = $$  $$
\exp ( \frac{-1}{\pi} \int_{\Sigma(\mu)} \frac{\rm dA(\zeta)}{(\zeta-w)(\overline{\zeta}-\overline{z})}), \ \ |z|, |w| > \| S \|.
\end{equation}

All  results above touch the surface of the theory of hyponormal operators, in particular with reference to the {\it principal function} of a semi-normal operator. We do not expand here the details, referring to the monograph \cite{Martin-Putinar-1989} for complete proofs and historical comments. In the present article we focus on the reconstruction of the cloud
$\Sigma(\mu)$ contained in the polynomial convex hull of the support of the measure $\mu$. Note for instance that
$$ {\rm Tr}[S^\ast, S] = \frac{1}{\pi} {\rm Area}\ \Sigma(\mu),$$
hence the operator $S$ is normal if and only if $ {\rm Area} \ \Sigma(\mu) =0$. We will see shortly that this in turn is equivalent to the density of complex polynomials in $L^2(\mu)$, quite a central topics in approximation theory.

For the limited aim of the present work, we confined ourselves to reproduce some well-known computations specific to Hankel operators.
The general theory of Hankel operators is superbly exposed in \cite{Peller-2003}. The following observations appeared in a slightly different form in \cite{Carey-Pincus-1979}, but they are not uncommon
to the general spectral analysis of Toeplitz operators, see for instance \cite{Zhu-2019}.

More precisely, the self-commutator of the subnormal operator $S = PMP = MP$ can be factored as:
$$ [S^\ast, S] = [PM^\ast P, PMP] = P M^\ast P M P - PMP M^\ast P = $$ $$P M^\ast M P - PMP M^\ast P = P M M^\ast PMP M^\ast P = $$ $$ P M (I-P) M^\ast P =
[P, M] [M^\ast, P] = [P, M] [P, M]^\ast.$$
Note that $MP = PMP$ and $PM^\ast = PM^\ast P$.
Therefore the linear operator (known as a {\it Hankel operator})
$$ T = (I-P)M^\ast P : P^2(\mu) \longrightarrow L^2(\mu) \ominus P^2(\mu)$$ 
is Hilbert-Schmidt: ${\rm Tr} \ T^\ast T < \infty$. We can express this fact in two different ways:
\begin{equation}\label{HS} 
\sum_{k=0}^\infty \| T p_k \|^2 = \sum_{k=0}^\infty \| \overline{z} p_k(z) - P [\overline{w} p_k(w)](z) \|^2 < \infty,
\end{equation}
or, there exists a kernel function $L \in L^2(\C \times \C, \mu \otimes \mu)$ satisfying
$$ [T f ](z) = \int L(z, w) f(w) d\mu(w), \ \ f \in P^2(\mu),$$
and in particular
$$ {\rm Tr}  \ T^\ast T = \int |L(z,w)|^2 d \mu(z) d \mu(w).$$
The singular numbers of $T$ will play a central role below.

\subsection{Function theory}

The fine structure of the subnormal operator $S = S_\mu$ was elucidated by a rather recent discovery by Jim Thomson. We state in full the main theorem.

\begin{theorem}[Thomson]\label{Thomson} Let $\mu$ be a positive Borel measure, compactly supported on $\C$. There exists a Borel partition $\Delta_0, \Delta_1, \ldots $
of the closed support of $\mu$ with the following properties:
\bigskip 

1) $P^2(\mu) = L^2(\mu_0) \oplus P^2(\mu_1) \oplus P^2(\mu_2) \oplus \ldots$, where $\mu_j = \mu|_{\Delta_j}, \ j \geq 0;$
\bigskip

2) Every operator $S_{\mu_j}, \ j \geq 1,$ is irreducible with spectral picture:
$$ \sigma(S_{\mu_j}) \setminus \sigma_{\rm ess}(S_{\mu_j}) = G_j,\ \ {\it simply \ connected},$$
and
$$ {\rm supp} \mu_j \subset \overline{G_j}, \ \ j \geq 1;$$
\bigskip

3) If $\mu_0 = 0$, then any element $f \in P^2(\mu)$ which vanishes $[\mu]$-a.e. on $G = \cup_j G_j$ is identically zero.

\end{theorem}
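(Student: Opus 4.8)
The plan is to reduce the whole structural statement to one analytic existence result and then to organize the resulting data into the claimed orthogonal decomposition. First I would isolate the dichotomy at the heart of the theorem. Call $\lambda \in \C$ a \emph{bounded point evaluation} for $P^2(\mu)$ if $p \mapsto p(\lambda)$ extends continuously from $\C[z]$ to $P^2(\mu)$; equivalently there is a reproducing vector $k_\lambda \in P^2(\mu)$ with $p(\lambda) = \langle p, k_\lambda\rangle$, and then $S_\mu^\ast k_\lambda = \overline{\lambda}\,k_\lambda$, so bounded point evaluations are exactly eigenvalues of $S_\mu^\ast$. The failure of density, $P^2(\mu) \neq L^2(\mu)$, is witnessed by a nonzero $g \in L^2(\mu)$ orthogonal to all powers $z^n$; writing $\rd\nu = \overline{g}\,\rd\mu$, the vanishing $\int z^n\,\rd\nu = 0$ forces the Cauchy transform $\widehat{\nu}(w) = \int (z-w)^{-1}\,\rd\nu(z)$ to vanish on the unbounded component of $\C \setminus \mathrm{supp}\,\nu$ while remaining a genuinely nontrivial analytic object. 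The strategy is to show that this nontriviality is incompatible with a total absence of bounded point evaluations.

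The crux, and what I expect to be by far the hardest step, is the analytic existence lemma: if $P^2(\mu) \neq L^2(\mu)$, then the set of bounded point evaluations has nonempty interior, and on that interior the evaluation is \emph{analytic} in $\lambda$. I would argue by contradiction, assuming that no bounded point evaluation exists, so that at every $\lambda$ there are polynomials $p$ with $p(\lambda) = 1$ but $\|p\|_{L^2(\mu)}$ arbitrarily small. The engine to exploit such cheap polynomials is a Vitushkin-type localization of the Cauchy transform together with estimates on analytic capacity: one decomposes the plane by a dyadic grid, singles out the squares on which $\mu$ is sufficiently ``heavy'' in the capacity sense, and organizes these squares through a combinatorial coloring so that the local cheap polynomials can be patched into a global approximation annihilating $\widehat{\nu}$ off the support, contradicting $g \neq 0$. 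This coloring scheme, which controls the interaction of the local approximations across all scales, is the genuinely deep ingredient and carries essentially all of the technical weight of the theorem.

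Granting the existence lemma, building the decomposition is largely organizational. Let $G$ be the open set of analytic bounded point evaluations and write $G = \bigcup_{j\geq 1} G_j$ for its connected components, each of which I would show to be simply connected via the same capacity estimates that forbid the evaluation functionals from jumping across component boundaries. Set $\Delta_j$ to be the Borel piece of $\mathrm{supp}\,\mu$ attached to $G_j$, and let $\Delta_0$ be the residual set carrying no bounded evaluation. On $\Delta_0$ the restricted measure $\mu_0$ has dense polynomials by the contrapositive of the existence lemma, so $S_{\mu_0}$ is normal and supplies the summand $L^2(\mu_0)$. On each $G_j$ the map $f \mapsto \widehat{f}(\lambda) = \langle f, k_\lambda\rangle$ furnishes an analytic continuation of evaluation, the eigenvectors $k_\lambda$ exhibit $G_j$ as $\sigma(S_{\mu_j}) \setminus \sigma_{\mathrm{ess}}(S_{\mu_j})$, and irreducibility follows since a reducing subspace would have to split the analytic eigenvector field $\lambda \mapsto k_\lambda$ over the connected region $G_j$. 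The mutual orthogonality of the summands and the fact that each reduces $S_\mu$ I would read off from the disjointness of the spectral pictures together with the reproducing-kernel structure.

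Finally, statement 3) is essentially the injectivity of the analytic-evaluation map. When $\mu_0 = 0$ we have $P^2(\mu) = \bigoplus_{j\geq 1} P^2(\mu_j)$, and on each $G_j$ the assignment $f \mapsto \widehat{f}$ produces a function holomorphic on $G_j$ that represents $f$. I would argue that if $f$ vanishes $[\mu]$-a.e.\ on $G = \bigcup_j G_j$, then each $\widehat{f}|_{G_j}$ vanishes on a non-discrete set and hence, by the identity principle on the connected open set $G_j$, vanishes identically; since a function in $P^2(\mu_j)$ is determined by its holomorphic representative, every component of $f$ is zero and so $f = 0$. The one genuinely delicate point is the compatibility between the $[\mu]$-a.e.\ values of $f$ and the pointwise holomorphic values $\widehat{f}$ on $G_j$, which is again underwritten by the regularity of $\lambda \mapsto k_\lambda$ furnished by the existence lemma.
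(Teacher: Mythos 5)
First, a point of reference: the paper does not prove this theorem at all --- it is imported verbatim, with the proof attributed to \cite{Thomson-1991} and a conceptually simpler argument to \cite{Brennan-2005}. So the comparison is really between your sketch and Thomson's published proof. On that score your outline is architecturally faithful: the dichotomy between density of polynomials and the existence of (analytic) bounded point evaluations is indeed the heart of the matter, the identification of bounded point evaluations with eigenvalues of $S_\mu^\ast$ via reproducing vectors $k_\lambda$ is correct, and Thomson's actual engine is exactly what you name --- a Vitushkin-type localization of the Cauchy transform $\widehat{\nu}$ of an annihilating measure $\rd\nu = \overline{g}\,\rd\mu$, run through a dyadic decomposition with a coloring scheme that patches ``cheap'' local polynomials across scales. (Brennan's route replaces much of this combinatorics with Tolsa's deep results on analytic capacity.)

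The genuine gap is that you have named the crux but not executed it. The entire content of the theorem lives in the existence lemma you defer: your paragraph reduces it to ``organize these squares through a combinatorial coloring so that the local cheap polynomials can be patched into a global approximation,'' but this coloring scheme occupies the bulk of Thomson's paper and there is no known short argument for it; a proof that black-boxes it is a roadmap, not a proof. Two secondary steps are also thinner than they look. The Borel partition $\Delta_0, \Delta_1, \ldots$ is not obtained by simply ``attaching'' pieces of $\mathrm{supp}\,\mu$ to components of the abpe set of $\mu$: Thomson constructs it by an iterative splitting procedure, and the $G_j$ arise as the abpe sets of the \emph{restricted} spaces $P^2(\mu_j)$, which need not be visible as components of a single global open set; your irreducibility argument (``a reducing subspace would have to split the analytic eigenvector field'') likewise needs the fact that the kernels $k_\lambda$, $\lambda \in G_j$, span $P^2(\mu_j)$, which is part of what must be proved. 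Finally, for statement 3) your identity-principle argument presupposes that the $[\mu]$-a.e.\ values of $f$ on $G_j$ agree with the holomorphic representative $\widehat{f}$, i.e.\ $f = \widehat{f}$ $[\mu_j]$-a.e.\ on $G_j$ and $\mu_j(G_j) > 0$ in a usable sense; you flag this as ``delicate,'' but it is a theorem inside Thomson's proof, not a consequence of regularity of $\lambda \mapsto k_\lambda$ alone. In short: correct map, but the territory --- the capacity estimates and the coloring construction --- is missing, and that is where essentially all the difficulty of the result resides.
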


The proof appeared in \cite{Thomson-1991}, for $L^p$ spaces, $1 \leq p < \infty.$  A conceptually simpler proof appears in \cite{Brennan-2005}.
The central position of Thomson's Theorem was immediately recognized in the monograph \cite{Conway-1991} (published almost simultaneously with the original article). 

\begin{corollary} The cloud $\Sigma(\mu)$ of a positive measure $\mu$ with compact support in $\C$ is empty if and only if the complex polynomials are dense in $L^2(\mu)$.
In case $\Sigma(\mu)$ is non-empty, it contains interior points: $G_j \subset \Sigma(\mu), \ \ j \geq 1.$
\end{corollary}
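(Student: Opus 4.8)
The plan is to treat the two assertions separately: the equivalence follows from the factorization of the self-commutator together with the cyclicity of the constant function, while the inclusion $G_j \subset \Sigma(\mu)$ follows from the additivity of the principal function under Thomson's direct-sum decomposition combined with a Fredholm index computation.

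For the equivalence, I would start from the recorded factorization $[S^\ast, S] = [P,M][P,M]^\ast$ and the area identity ${\rm Tr}[S^\ast,S] = \frac{1}{\pi}{\rm Area}\,\Sigma(\mu)$. If the polynomials are dense, then $P = \Id$, so $S = M$ is the multiplication operator on all of $L^2(\mu)$, which is always normal; hence $[S^\ast,S] = 0$ and ${\rm Area}\,\Sigma(\mu) = 0$, i.e. $\Sigma(\mu)$ is empty modulo null sets. Conversely, if $\Sigma(\mu)$ is empty then ${\rm Tr}[S^\ast,S] = 0$; since $[S^\ast,S] = [P,M][P,M]^\ast$ is a positive trace-class operator, the vanishing of its trace forces $[P,M] = 0$. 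Thus $P$ commutes with both $M$ and $M^\ast$, so $P^2(\mu) = {\rm ran}\,P$ is a reducing subspace for the normal operator $M$. As it contains $\mathbf{1}$, it contains every $M^{j}(M^\ast)^{k}\mathbf{1} = z^{j}\overline{z}^{k}$, whose closed linear span is all of $L^2(\mu)$ by the Stone--Weierstrass theorem; hence $P^2(\mu) = L^2(\mu)$ and the polynomials are dense. This settles the first assertion without appeal to Thomson's theorem.

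For the inclusion $G_j \subset \Sigma(\mu)$, I would identify $\chi_{\Sigma(\mu)}$, up to normalization, with the principal function $g_S$ of $S$, which by the Carey--Pincus integrality theorem is integer-valued and, $S$ being hyponormal, non-negative; thus $\Sigma(\mu)$ coincides up to null sets with $\{g_S \neq 0\} = \{g_S \geq 1\}$. Using Thomson's decomposition $S = N \oplus S_1 \oplus S_2 \oplus \cdots$ and the additivity of the principal function under orthogonal direct sums, one has $g_S = \sum_{j \geq 1} g_{S_j}$, since the normal summand $N$ contributes $g_N = 0$. It therefore suffices to show $g_{S_j} \geq 1$ on $G_j$. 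Fix $\lambda \in G_j = \sigma(S_j)\setminus\sigma_{\rm ess}(S_j)$; then $S_j - \lambda$ is Fredholm, and off the essential spectrum the principal function equals minus the Fredholm index, $g_{S_j}(\lambda) = -{\rm ind}(S_j - \lambda)$. Because $S_j$ is irreducible and not a scalar, a pure subnormal operator has no point spectrum, so $\ker(S_j - \lambda) = 0$, while the points of $G_j$ are bounded point evaluations, whence $\ker(S_j^\ast - \overline{\lambda}) \neq 0$; therefore ${\rm ind}(S_j - \lambda) < 0$ and $g_{S_j}(\lambda) \geq 1$. Consequently $g_S \geq 1$ on $G_j$, giving $G_j \subset \Sigma(\mu)$ up to a null set.

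The main obstacle I anticipate is not the equivalence, which is a clean operator-theoretic argument, but the precise identification of $\Sigma(\mu)$ with the support of the principal function and the justification of the identity ``principal function equals minus the Fredholm index'' off the essential spectrum. This rests on the structure theory of the principal function of a semi-normal operator, for which I would cite \cite{Martin-Putinar-1989}, together with the one-dimensionality of the adjoint eigenspaces over $G_j$ for the cyclic irreducible pieces $S_j$. Once these standard facts about the principal function are granted, both halves of the corollary follow as above.
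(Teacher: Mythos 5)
Your proof is correct, and on the second assertion it coincides with the paper's (implicit) argument, but on the equivalence it takes a genuinely different and more elementary route. The paper presents the corollary as an immediate consequence of Thomson's Theorem together with the trace identity ${\rm Tr}[S^\ast,S]=\frac{1}{\pi}{\rm Area}\,\Sigma(\mu)$: density of polynomials is equivalent to normality of $S_\mu$ via Thomson's decomposition (if $P^2(\mu)\neq L^2(\mu)$ there is at least one irreducible, non-normal summand $S_{\mu_j}$ carrying the open set $G_j$ of bounded point evaluations), and the inclusion $G_j\subset\Sigma(\mu)$ is exactly the principal-function/index computation you carry out, matching the spectral picture the paper records, namely $\ker(S_{\mu_j}-\lambda)=0$, $\dim\ker(S_{\mu_j}^\ast-\overline{\lambda})=1$ with closed range, hence ${\rm ind}(S_{\mu_j}-\lambda)=-1$ on $G_j$. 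Where you diverge is the implication ``$\Sigma(\mu)$ empty $\Rightarrow$ polynomials dense'': instead of invoking Thomson's Theorem you use only the factorization $[S^\ast,S]=[P,M][P,M]^\ast$, positivity of the self-commutator (so vanishing trace forces $[P,M]=0$), and Stone--Weierstrass applied to the reducing subspace $P^2(\mu)$ containing the cyclic vector $\mathbf{1}$; this replaces a deep function-theoretic theorem with soft operator theory and makes the dichotomy self-contained --- though it buys only the dichotomy, whereas the route through Thomson's Theorem simultaneously delivers the interior structure (the sets $G_j$ and their point evaluations) that the second half of the corollary needs anyway. Three small points to tighten: ``empty'' must be read modulo area null sets throughout, since $\Sigma(\mu)$ is only defined up to such sets (you do note this); the additivity $g_S=\sum_{j\geq 1}g_{S_j}$ over possibly infinitely many summands deserves a word of justification, which the Berger--Shaw bound ${\rm Tr}[S^\ast,S]<\infty$ supplies by making the sum convergent in $L^1(dA)$; and the absence of eigenvalues for the pure summands follows in one line from hyponormality (an eigenvector of $S_j$ is automatically an eigenvector of $S_j^\ast$, so it spans a reducing subspace on which $S_j$ is scalar, contradicting irreducibility), which is worth making explicit rather than quoting purity as a black box.
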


The operator $S_{\mu_0} = M_z \in {\mathcal L}(L^2(\mu_0))$ is the single normal component of $S_\mu$ while the summands $S_{\mu_j}$ collect the non-normal behavior of
$S_\mu$. For every index $j, j \geq 1,$
the theorem asserts that $S_{\mu_j}$ admits a continuum of eigenvalues of multiplicity one, filling the simply connected open set $G_j$:
$$ \lambda \in G_j \ \ \Rightarrow [\ker (S_{\mu_j} -\lambda) = 0, \ \ \dim \ker(S_{\mu_j}^\ast - \overline{\lambda}) = 1].$$
Moreover, it is known that the range of $S_{\mu_j}-\lambda$ is closed for such spectral parameters. The corresponding eigenvectors span $P^2(\mu_j)$, to the extent that this functional Hilbert space carries a
reproducing kernel. This feature is detected by the local Christoffel function:
$$ \Lambda^{\mu_j}(\lambda) : = \inf \{ \|f \|^2_{\mu_j}; f \in \C[z], \ f(\lambda) =1\} >0,$$
and ultimately by the full Christoffel function:
\begin{equation}
\Lambda (\lambda) = \Lambda^\mu (\lambda)  = \inf \{ \|f \|^2_{\mu}; f \in \C[z], \ f(\lambda) =1\} >0, \ \lambda \in G_j, \ j \geq 1.
\end{equation}

\begin{corollary} The cloud of a positive measure with compact support $\mu$ is non-empty if and only if
there exists a point $\lambda \in \C$ satisfying $\Lambda^\mu(\lambda) > 0$.
\end{corollary}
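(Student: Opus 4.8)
The plan is to lean on the first corollary, which identifies an empty cloud with the density of $\C[z]$ in $L^2(\mu)$. Granting that equivalence, the statement reduces to showing that $\C[z]$ fails to be dense in $L^2(\mu)$ if and only if $\Lambda^\mu(\lambda)>0$ for some $\lambda\in\C$. I would treat the two implications separately, the forward one being essentially a bookkeeping exercise on the material already assembled and the reverse one carrying the analytic content.

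For the forward implication, suppose the cloud is non-empty. By the preceding corollary the polynomials are then not dense, so in Thomson's decomposition at least one non-normal summand $P^2(\mu_j)$, $j\geq 1$, is present; for such a $j$ the region $G_j$ is a non-empty (simply connected) open set. The displayed inequality recorded just before the statement, namely $\Lambda^\mu(\lambda)>0$ for $\lambda\in G_j$, $j\geq 1$, then hands us the required point for free.

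The substance is the reverse implication. Assume $\Lambda^\mu(\lambda)>0$ at some $\lambda$, which is precisely the assertion that point evaluation at $\lambda$ is bounded on $(\C[z],\|\cdot\|_\mu)$; let $k_\lambda\in P^2(\mu)$ be the normalised reproducing vector, so that $\langle f,k_\lambda\rangle=f(\lambda)$ for all $f\in\C[z]$ and in particular $\langle 1,k_\lambda\rangle=1$, whence $k_\lambda\neq 0$. Testing the reproducing identity against $zf$ gives $\langle f,S_\mu^\ast k_\lambda\rangle=\langle zf,k_\lambda\rangle=\lambda f(\lambda)=\langle f,\overline{\lambda}k_\lambda\rangle$ for every polynomial $f$, so $k_\lambda$ is an eigenvector of $S_\mu^\ast$ for the eigenvalue $\overline\lambda$, exactly the adjoint-eigenvalue phenomenon described in Thomson's Theorem. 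Now argue by contradiction: if the polynomials were dense, then $P^2(\mu)=L^2(\mu)$ and $M=S_\mu$ would be normal with $M^\ast=M_{\overline z}$, so the eigenvector relation reads $(\overline z-\overline\lambda)k_\lambda=0$ $[\mu]$-a.e., forcing $k_\lambda$ to be supported on the single point $\{\lambda\}$. When $\lambda$ is not an atom this makes $k_\lambda=0$ in $L^2(\mu)$, contradicting $\langle 1,k_\lambda\rangle=1$; hence $\C[z]$ is not dense and the cloud is non-empty.

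The delicate point, and the one I expect to require the most care, is exactly the atomic case suppressed in the last step: a mass point $\lambda$ carries the trivial bounded point evaluation $k_\lambda=\chi_{\{\lambda\}}/\mu(\{\lambda\})$, giving $\Lambda^\mu(\lambda)\geq\mu(\{\lambda\})>0$ while leaving $M$ normal, so a completely atomic compactly supported $\mu$ would satisfy $\Lambda^\mu>0$ at each atom yet have empty cloud. To keep the equivalence sharp I would either state it for measures without atoms, or read ``$\Lambda^\mu(\lambda)>0$'' as a bounded point evaluation at a \emph{non-isolated} point of $\mathrm{supp}(\mu)$; via Thomson's decomposition the latter is precisely the condition $\lambda\in\bigcup_{j\geq 1}G_j$, which dovetails exactly with the point produced in the forward direction. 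The remaining verifications, that the minimiser in the definition of $\Lambda^\mu$ is the normalised reproducing vector and that $M^\ast=M_{\overline z}$ on $L^2(\mu)$, are routine.
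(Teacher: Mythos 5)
Your proof is correct in its core and in fact supplies more than the paper does: the paper states this corollary without any proof, as an immediate consequence of Thomson's Theorem and the displayed inequality $\Lambda^\mu(\lambda)>0$ for $\lambda\in G_j$, $j\geq 1$ --- which is exactly your forward direction (non-empty cloud $\Rightarrow$ polynomials not dense, by the first corollary $\Rightarrow$ some non-normal summand with $G_j\neq\emptyset$ $\Rightarrow$ $\Lambda^\mu>0$ there). The reverse direction is left entirely implicit in the paper, and your argument ($S_\mu^\ast k_\lambda=\overline{\lambda}k_\lambda$, so density of polynomials would make $S_\mu$ the normal operator $M_z$ with $M^\ast=M_{\overline z}$, forcing $(\overline z-\overline\lambda)k_\lambda=0$ $[\mu]$-a.e.) is a clean way to supply it. Moreover, your atomic caveat is a genuine catch, not a quibble: at an atom one always has $\Lambda^\mu(\lambda)\geq\mu(\{\lambda\})>0$, so the corollary as literally stated is false --- take $\mu$ purely atomic with infinite compact support in $\R$; polynomials are dense in $L^2(\mu)$ (Weierstrass), the cloud is empty, yet $\Lambda^\mu>0$ at every atom. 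The statement must be read with the proviso $\mu(\{\lambda\})=0$, and under that reading your argument closes both directions (the forward direction survives the fix, since $G_j$ is open and uncountable while the atoms are countable).

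One of your two proposed repairs is wrong, however. Requiring the bounded point evaluation to sit at a \emph{non-isolated} point of ${\rm supp}(\mu)$ does not suffice, and your claimed equivalence of that condition with $\lambda\in\bigcup_{j\geq 1}G_j$ fails: in the example above, take the atoms dense in $[0,1]$. Then every atom is a non-isolated point of the support and carries a bounded point evaluation, yet the cloud is empty and Thomson's decomposition has no non-normal summands at all. The correct hypothesis is the non-atomicity condition $\mu(\{\lambda\})=0$ (your first suggestion), which is precisely what your final step --- $k_\lambda$ supported on $\{\lambda\}$, hence zero in $L^2(\mu)$, contradicting $\langle \mathbf{1},k_\lambda\rangle=1$ --- actually uses.
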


Szeg\"o's theory of orthogonal polynomials on the unit circle $\T$ provides a clear cut picture: if ${\rm supp}(\mu) \subset \T$, then $\Sigma(\mu)$ is either empty or equal to the full disk $\overline{\D}$.
The latter case is characterized by Szeg\"o's condition:
$$ \int_\T  | \log \frac{d \mu}{d \theta}| d \theta < \infty,$$
or equivalently by Christoffel function test: for at least one point $\lambda \in \D$ one has
$$ \Lambda^\mu(\lambda) >0,$$
and then the same is true for all $z \in \D$. See for instance \cite{Ahiezer-1965}.

Another, related and notable particular case is represented by measures $\mu$ subject to the finiteness condition
$$ {\rm rank} [S^\ast, S] < \infty.$$ This situation was fully analyzed by McCarthy and Yang \cite{McCarthy-Yang-1995,McCarthy-Yang-1997} with the following conclusion:
Thomson's decomposition admits finitely many irreducible summands (that is $0 \leq j \leq n$) with the normal part represented by a positive measure $\mu_0$ which is
singular with respect to harmonic measure on every $G_j, 1 \leq j \leq n,$ and does not put mass on $\cup_{j=1}^n G_j$; in addition every simply connected component $G_j$ is a quadrature domain.
In their theorem a complete description of the measures $\mu_j, 1 \leq j \leq n,$ is provided. Moreover, under this finite rank condition, the intersection of any two distinct sets
$\overline{G_j} \cap \overline{G_k}$ contains at most one single point, and the union $F = \cup_{j=1}^n \overline{G_j}$ is polynomially convex. In other terms the complement
$\C \setminus F$ is connected.

A bounded open set $\Omega$ of the complex plane
is called a {\it quadrature domain} for analytic functions if there is a distribution of finite support $\tau$ in $\Omega$ (combination of point masses and their derivatives), such that
$$ \int_\Omega f {\rm d A} = \tau(f), \ \ f \in L^1_a(\Omega, {\rm d A}),$$
where $L^1_a(\Omega, {\rm d A})$ stands for the space of analytic functions in $\Omega$ which are Lebesgue integrable. 
The {\it order} of a quadrature domain is the number of nodes, counting multiplicity, in the above cubature formula. A connected quadrature domain has an irreducible real algebraic boundary. The simplest example is of course a disk. Indeed, on the unit disk
$$ \int_\D f(z) dA(z) = \pi f(0), \ \ f \in  L^1_a(\D, {\rm d A})$$
by Gauss mean value theorem.
Any simply connected quadrature domain is a conformal image of the disk by a rational function, for instance a cardiodid or a connected lemniscate of degree four. An informative and accessible survey of quadrature domains, with ramifications to potential theory, inverse geophysical problems, quantum physics and operator theory is \cite{Gustafsson-Shapiro-2005}.

We record these facts under a precise statement.

\begin{proposition} Let $\mu$ be a positive Borel measure with compact support in $\C$. The cloud $\Sigma(\mu)$ appearing in trace formula (\ref{HH})
contains every open set $G_j, \ j \neq 0,$ appearing in Thomson's decomposition of the operator $S_\mu$.

In case ${\rm rank} [S^\ast_\mu, S_\mu] < \infty$ there are only finitely many summands ( $ 1 \leq j \leq n$) and the cloud $\Sigma(\mu)$ coincides up to an area null-set with the quadrature domain $\Omega = G_1 \cup \ldots \cup G_n.$
\end{proposition}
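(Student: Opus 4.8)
The plan is to exploit the additivity of the principal function (equivalently of the class $\chi_{\Sigma(\mu)}$) across Thomson's orthogonal decomposition, combined with a local identification of this function with the negative of the Fredholm index. The first assertion is then essentially a restatement of the second Corollary above, and the second assertion reduces, via McCarthy--Yang, to pinning down the support and boundary of each summand's principal function.

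First I would establish the additivity. Applying the hereditary functional calculus to $S_\mu = S_{\mu_0} \oplus S_{\mu_1} \oplus \cdots$ gives $p(S_\mu, S_\mu^\ast) = \bigoplus_{j \geq 0} p(S_{\mu_j}, S_{\mu_j}^\ast)$ for every $p \in \C[z,\overline{z}]$, so each commutator $[p(S_\mu,S_\mu^\ast),q(S_\mu,S_\mu^\ast)]$ is block-diagonal and its trace is the sum of the block traces, all trace-class by Berger--Shaw and Helton--Howe. Comparing with (\ref{HH}) on the Jacobians $J(p,q)$, which determine the principal function uniquely, yields the identity of $L^1$-classes
$$ \chi_{\Sigma(\mu)} = \sum_{j \geq 0} \chi_{\Sigma(\mu_j)} \quad \text{a.e.} $$
The normal summand drops out: since $S_{\mu_0}$ commutes with $S_{\mu_0}^\ast$, every commutator in the trace formula vanishes, so $\chi_{\Sigma(\mu_0)} = 0$.

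Next I would compute each summand on $G_j$, for $j \geq 1$. By Thomson's Theorem, for $\lambda \in G_j$ the operator $S_{\mu_j}-\lambda$ has trivial kernel, closed range, and one-dimensional cokernel, hence is Fredholm of index $-1$. Since the principal function of a hyponormal operator is nonnegative and equals the negative of the Fredholm index wherever the operator is Fredholm, I get $\chi_{\Sigma(\mu_j)} = 1$ on $G_j$ (the unilateral shift, whose cloud is the disk, is the model case). Combining with the additivity and the nonnegativity of every summand, $\chi_{\Sigma(\mu)} \geq \chi_{\Sigma(\mu_j)} = 1$ on $G_j$, i.e. $G_j \subseteq \Sigma(\mu)$ up to an area null-set, which is the first assertion. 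For the finite-rank statement I would invoke McCarthy--Yang: there are finitely many summands $j=1,\dots,n$, each $G_j$ is a quadrature domain, the closures $\overline{G_j}$ meet pairwise in at most one point, and $\mu_0$ charges none of the $G_j$. The inclusion $\Omega = G_1 \cup \cdots \cup G_n \subseteq \Sigma(\mu)$ is the first part; for the reverse, note $\sigma(S_{\mu_j})=\overline{G_j}$, so $S_{\mu_j}-\lambda$ is invertible off $\overline{G_j}$ and the principal function vanishes there, while the algebraic boundary $\partial G_j$ has zero area. Hence the local computation upgrades to $\chi_{\Sigma(\mu_j)} = \chi_{G_j}$ a.e. Because the $\overline{G_j}$ overlap in at most finitely many points, the open sets $G_j$ are disjoint, so $\chi_{\Sigma(\mu)} = \sum_{j=1}^n \chi_{G_j} = \chi_\Omega$ a.e., identifying $\Sigma(\mu)$ with $\Omega$ up to area null-sets; and $\Omega$ is itself a quadrature domain, with quadrature distribution the sum of those of the disjoint pieces.

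The main obstacle I anticipate is the precise identification of the principal function with $-\,{\rm ind}(S_{\mu_j}-\lambda)$ throughout $G_j$, together with the verification that $\chi_{\Sigma(\mu_j)}$ is supported on $\overline{G_j}$ with a null boundary; this is where Thomson's multiplicity-one structure, the integrality of the principal function (Carey--Pincus), and the algebraicity of quadrature-domain boundaries all enter. By contrast, the additivity over the direct sum and the suppression of the normal part are comparatively formal.
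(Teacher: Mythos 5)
Your proposal is correct and takes essentially the same route as the paper, which records this Proposition without a separate proof as a summary of the results just cited: the first claim is the earlier Corollary to Thomson's Theorem (principal function equals $-\,\mathrm{ind}(S_{\mu_j}-\lambda)=1$ on each $G_j$, by Carey--Pincus), and the second is read off from the McCarthy--Yang structure theorem. Your additivity of the principal function across Thomson's direct sum, the vanishing of the normal block, the index-$(-1)$ count on $G_j$, and the identification $\chi_{\Sigma(\mu_j)}=\chi_{G_j}$ a.e.\ (using $\sigma(S_{\mu_j})=\overline{G_j}$ and the area-null algebraic boundaries of quadrature domains) supply exactly the steps the paper delegates to the cited literature, with only minor details left implicit (absolute convergence of the block traces, which follows from $\sum_j \mathrm{Area}\,\Sigma(\mu_j)=\pi\,\mathrm{Tr}[S_\mu^\ast,S_\mu]<\infty$).
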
 

The nature of the measure $\mu$ is also quite explicit in the case of finite rank self-commutator, as follows. Let $r : \D \longrightarrow \Omega$ denote a conformal rational map onto a bounded quadrature domain $\Omega$. Let $\tau$ be an absolutely continuous measure with respect to
harmonic measure $\omega$ for $\Omega$, supported on $\partial \Omega$ and satisfying
$$ \int_{\partial \Omega} |\log (\frac{ d\tau}{d\omega})| d\omega < \infty.$$
Let $\mu = \tau + \nu$, where $\nu$ is a positive, finitely supported measure on $\Omega$. Then
complex polynomials are not dense in $L^2(\mu)$ and the operator $S_\mu$ is a typical irreducible subnormal operator possessing finite rank self-commutator, cf. Theorem 1.12 in \cite{McCarthy-Yang-1997}.
In this case $\overline{\Omega}$ coincides with the cloud $\Sigma(\mu)$.

\section{Finite rank approximation}

\subsection{Commutator inequalities}

Let $\phi \in {\mathcal C}^\infty (\C)$. The operator of multiplication by $\phi$ on $L^2(\mu)$ is denoted $M_\phi$ or by the same letter if there is no confusion. Obviously $M_\phi$ is bounded. Jacobi's identity:
$$ [M^\ast, [\phi, P]] + [\phi, [P,M^\ast]] + [\phi, [P,M^\ast]] =0$$
yields
$$ [M^\ast, [\phi, P]] = [\phi, [M^\ast, P]].$$
In other terms, for an element $f \in L^2[\mu]$ we find:
$$ \overline{z} ([\phi, P] f(w) ) (z) - ([\phi, P] \overline{w} f(w))(z) = $$ $$
\phi(z) ([M^\ast, P] f(w) ) (z) - ([M^\ast, P] \phi(w) f(w))(z).$$
From here we deduce at the level of integral kernels:
\begin{equation}\label{commutator}
([\phi, P] f)(z) = \int L(z,w) \frac{ \phi(z)-\phi(w)}{\overline{z} - \overline{w}} f(w) d \mu(w), \ \ f \in L^2(\mu).
\end{equation} 

In particular, the commutator $[\phi, P]$ is Hilbert-Schmidt for every smooth function $\phi$.
Observe that for $\phi$ complex analytic and $f \in P^2(\mu)$ one has $[\phi, P] f = \phi P f - P (\phi f) = \phi f - \phi f = 0.$

One step further, we can write explicitly the integral kernel of the commutator appearing in trace formula (\ref{HH}). For a polynomial $p \in \C[z,\overline{z}]$
either one computes $p(S,S^\ast)$ via an ordered functional calculus, or one takes $P M_p P$, the result differs by a trace class operator:
$$ p(S,S^\ast) - P M_p P \in {\mathcal C}_1.$$
Hence the trace of $[p(S,S^\ast), S]$ is not affected by such a rearrangement of terms. Pure algebra yields:
$$ [P M_p P, S] = P M_p M P - P M P M_p P = $$ $$ P (M M_p - M P M_p) P = P M (I-P) M_p P = [P, M] [ p, P].$$

Summing up,
\begin{equation}\label{integral}
{\rm Tr} [p(S,S^\ast), S] = \int \int | L(z,w)|^2 \frac{ p(z,\overline{z})-p(w,\overline{w})}{\overline{z} - \overline{w}} d \mu(z) d\mu(w), \ \ p \in \C[z, \overline{z}].
\end{equation}

The above formula extends to all Toeplitz operators $T_\phi = P M_\phi P$ with a smooth symbol. In addition, trace formula yields: 
$$ {\rm Tr} [T_\phi, S] = \frac{1}{\pi} \int_{\Sigma(\mu)} \frac{\partial \phi}{\partial \overline{z}} dA = $$ $$ \int \int | L(z,w)|^2 \frac{ \phi(z)-\phi(w)}{\overline{z} - \overline{w}} d \mu(z) d\mu(w), \ \ \phi \in {\mathcal C}^\infty (\C).$$

One can deduce from above a few qualitative properties of the integral kernel $L(z,w)$, by exploiting the distributional sense of the above identity. In the present article we are concerned only with some approximation estimates. The ad-hoc notation 
$$ (\Delta \phi)(z,w) =  \frac{ \phi(z)-\phi(w)}{\overline{z} - \overline{w}} $$
is adopted in the following pages, as well as the Besov type semi-norm
$$ | \phi |_F = \inf_{p \in \C[z]} \| \Delta ( \phi - p) \|_{\infty, F \times F},$$
where $F$ is a closed subset of the complex plane. Identity \ref{commutator} implies, for every smooth function $\phi$:
$$ \| [\phi, P] \|_{HS} \leq \| L \|_{2, \mu \times \mu} | \phi |_{{\rm supp} (\mu)}.$$

Let $Q$ denote an orthogonal projection of $L^2(\mu)$ onto a closed subspace of $P^2(\mu)$. For a fixed smooth function $\phi$ one finds, along the above computations: 
$$ {\rm Tr}\ Q [T_\phi, S] Q = {\rm Tr}\ Q [S, P] [T_\phi, P] = {\rm Tr}\ Q [S, P] [\phi, P] = {\rm Tr} \ Q [M, P] [\phi, P].$$
The truncated operator $[M, P]^\ast Q$ has finite rank and it is represented by an integral kernel:
$$ ([M, P]^\ast Q f) (z) = \int L_Q(z,w) f(w) d \mu(w), \ \ f \in P^2(\mu).$$
More importantly,
$$ |{\rm Tr} \ Q [T_\phi, S] Q - {\rm Tr} \  [T_\phi, S] | = | {\rm Tr} (I-Q)[M,P] [\phi, P] | \leq 
$$ $$ \|  (I-Q)[M,P] \|_{HS} \| [\phi, P] \|_{HS} \leq  \|  (I-Q)[M,P] \|_{HS} \| [M^\ast, P] \|_{HS} | \phi|_{{\rm supp} (\mu)}.$$
Recall also that
$$ \| [M^\ast, P] \|^2_{HS} = {\rm Tr} [S^\ast, S] =  \frac{ {\rm Area} \Sigma(\mu)}{\pi}.$$

In conclusion we have proved the following result.

\begin{theorem}\label{gap}
 Let $\mu$ be a positive Borel measure with compact support in $\C$ and let $\phi \in {\mathcal C}^\infty (\C)$. For an orthogonal projection $Q$ of
$L^2(\mu)$ onto a closed subspace of $P^2(\mu)$ the estimate
\begin{equation}\label{estimate}
|{\rm Tr} \ Q [T_\phi, S] Q - {\rm Tr} \  [T_\phi, S] | \leq  \sqrt{\frac{ {\rm Area} \Sigma(\mu)}{\pi}} \| (I-Q)[M,P] \|_{HS}  \ |\phi |_{{\rm supp} (\mu)}.
\end{equation}
holds true.
\end{theorem}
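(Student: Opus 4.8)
The plan is to assemble Theorem~\ref{gap} directly from the kernel representation of the commutators and the trace-class/Hilbert-Schmidt bookkeeping already laid out in this subsection, so that essentially no new machinery is required. The starting point is the algebraic identity $[T_\phi, S] = [M,P][\phi,P]$ established above (using $MP = PMP$ and $PM^\ast = PM^\ast P$), together with its truncated version $Q[T_\phi,S]Q = Q[M,P][\phi,P]$ once one observes that conjugation by the projection $Q$ onto a subspace of $P^2(\mu)$ leaves the factorization intact. The difference of traces then telescopes:
\begin{equation*}
{\rm Tr}\,Q[T_\phi,S]Q - {\rm Tr}\,[T_\phi,S] = -\,{\rm Tr}\,(I-Q)[M,P][\phi,P].
\end{equation*}

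Next I would bound the right-hand side by the standard trace-class inequality $|{\rm Tr}\,AB| \leq \|A\|_{HS}\|B\|_{HS}$, applied to $A = (I-Q)[M,P]$ and $B = [\phi,P]$. This reduces the estimate to controlling the two Hilbert-Schmidt norms separately. For the second factor I would invoke the kernel formula (\ref{commutator}), which expresses $[\phi,P]$ as integration against $L(z,w)\,(\Delta\phi)(z,w)$; since $[\phi,P]f = 0$ for analytic $\phi$ and $f \in P^2(\mu)$, one may subtract an arbitrary polynomial $p$ from $\phi$ without changing the operator, and then take the infimum over $p$ to obtain $\|[\phi,P]\|_{HS} \leq \|L\|_{2,\mu\times\mu}\,|\phi|_{{\rm supp}(\mu)}$. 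The quantity $\|L\|_{2,\mu\times\mu}$ is precisely $\|[M^\ast,P]\|_{HS}$, which by the self-commutator computation equals $\sqrt{{\rm Area}\,\Sigma(\mu)/\pi}$.

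Finally I would combine these pieces: $\|[\phi,P]\|_{HS} \leq \sqrt{{\rm Area}\,\Sigma(\mu)/\pi}\,|\phi|_{{\rm supp}(\mu)}$, while the first factor is simply left as $\|(I-Q)[M,P]\|_{HS}$, yielding the claimed inequality (\ref{estimate}). The only point requiring genuine care—and the step I expect to be the main obstacle—is the passage to the Besov-type seminorm $|\phi|_{{\rm supp}(\mu)}$: one must justify that replacing $\phi$ by $\phi - p$ in the kernel is legitimate at the level of the Hilbert-Schmidt operator (not merely formally) and that the resulting supremum of $\Delta(\phi-p)$ over ${\rm supp}(\mu) \times {\rm supp}(\mu)$ correctly dominates the $L^2(\mu\otimes\mu)$ norm of the product kernel. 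This is where the analytic invariance of $[\phi,P]$ under polynomial shifts and the boundedness of $\Delta(\phi-p)$ on the compact support must be reconciled; everything else is routine tracking of Hilbert-Schmidt norms through the fixed factorization.
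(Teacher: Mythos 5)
Your proposal reproduces the paper's own argument step for step: the factorization $[T_\phi,S]=[P,M][\phi,P]$, the telescoping of traces (which, note, is valid at the level of traces via ${\rm Tr}\,QXQ={\rm Tr}\,Q^2X={\rm Tr}\,QX$, not as the operator identity you wrote), the Hilbert--Schmidt Cauchy--Schwarz bound $|{\rm Tr}\,AB|\le\|A\|_{HS}\|B\|_{HS}$, and the identification $\|L\|_{2,\mu\otimes\mu}=\|[M^\ast,P]\|_{HS}=\sqrt{{\rm Area}\,\Sigma(\mu)/\pi}$ are precisely the computations preceding Theorem~\ref{gap} in the text.

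However, the step you yourself flagged as the main obstacle is genuinely mis-justified as you state it. It is \emph{not} true that one may subtract an analytic polynomial from $\phi$ ``without changing the operator'': $[p,P]$ vanishes on $P^2(\mu)$ but not on its orthogonal complement, where $[p,P]f=-P(pf)$. Already $[z,P]=-PM(I-P)=-[M^\ast,P]^\ast$ is nonzero whenever $S$ is not normal (on the circle with $d\mu=d\theta/2\pi$ one has $[z,P]\overline z=-1$). Consequently the intermediate inequality $\|[\phi,P]\|_{HS}\le\|L\|_{2,\mu\times\mu}\,|\phi|_{{\rm supp}(\mu)}$ --- which, to be fair, the paper also displays in this loose form --- is false as literally stated: for $\phi(z)=z$ the right-hand side is $0$ while the left-hand side is not. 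The repair is one line and leaves the rest of your argument intact: only the Hankel part of $[\phi,P]$ enters the trace, because $[M,P]=-PM(I-P)$ annihilates $P^2(\mu)$, so that for every analytic polynomial $p$
\[
[M,P][\phi,P]=-PM(I-P)\phi P=-PM(I-P)(\phi-p)P=[M,P][\phi-p,P],
\]
using $(I-P)M_pP=0$. One should therefore replace $[\phi,P]$ by $[\phi-p,P]$ \emph{inside the trace} before applying Cauchy--Schwarz, bound $\|[\phi-p,P]\|_{HS}\le\|L\|_{2,\mu\times\mu}\,\|\Delta(\phi-p)\|_{\infty,\sigma\times\sigma}$ with $\sigma={\rm supp}(\mu)$ via the kernel formula (\ref{commutator}), and only then take the infimum over $p$; this yields (\ref{estimate}) exactly. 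So your instinct about where the difficulty lies was correct, but the resolution is the invariance of the product $[M,P][\phi,P]$ under $\phi\mapsto\phi-p$, not invariance of $[\phi,P]$ itself.
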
 

Remark that one can express the distortion factor above in function space norm:
$$ \| (I-Q) [M,P] \|_{HS} = \| L  - L_Q \|_{2, \mu \otimes \mu}.$$

The particular case of a finite rank projection $Q$ onto a subspace of polynomials will be of interest in the next sections.

Hankel operator $T = [M^\ast, P] = (I-P) M^\ast P: P^2(\mu) \longrightarrow L^2(\mu) \ominus P^2(\mu)$ is Hilbert-Schmidt. Its Schmidt expansion
\begin{equation}\label{schmidt}
 (I-P)M^\ast P= \sum_{j=0}^\infty \kappa_j g_j \langle \cdot, f_j \rangle 
 \end{equation}
identifies the singular numbers $(\kappa_j)$ together with the eigenvectors of its modulus:
$$ [S^\ast, S] f_j = T^\ast T f_j = \kappa_j^2 f_j, \ j \geq 1.$$
By convention we include here the null vectors of $T$, so that $(f_j)_{j=0}^\infty$ is an orthonormal basis of $P^2(\mu)$. And similarly $g_j$ are eigenvectors of
$T T^\ast$, forming an orthonormal basis of $P^2(\mu)^\perp.$ Courant-Fisher's min-max principle implies the following bound.

\begin{corollary} In the conditions of the Theorem, let $\kappa_1 \geq \kappa_2 \geq \ldots \geq 0$ denote the singular numbers of Hankel's operator $[M^\ast, P] = (I-P)M^\ast P$.
If $Q$ is a  projection of finite rank $d$ onto a subspace of $P^2(\mu)$, then
$$ \| [M^\ast, P] (I-Q) \|^2_{HS} \geq  \sum_{j \geq d} \kappa_j^2,$$
and the inequality is attained if $Q$ is the projection onto the span of vectors \\
$f_0, f_1, \ldots, f_{d-1}$.
\end{corollary}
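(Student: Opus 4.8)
The plan is to convert the Hilbert-Schmidt norm on the left into a trace that exposes the squared singular numbers of $T=[M^\ast,P]$, and then to recognize the resulting extremal problem as an instance of the Courant-Fischer/Ky Fan maximum principle for sums of eigenvalues.

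First I would remove the distinction between $I-Q$ and the orthogonal projection onto $P^2(\mu)\ominus\mathrm{ran}\,Q$. Since $T=(I-P)M^\ast P$ satisfies $TP=T$, one has $T(I-Q)=T-TQ=T(P-Q)$; and because $\mathrm{ran}\,Q\subseteq P^2(\mu)=\mathrm{ran}\,P$, the two orthogonal projections obey $QP=PQ=Q$, so $P-Q$ is itself the orthogonal projection onto $P^2(\mu)\ominus\mathrm{ran}\,Q$. Using idempotency of $P-Q$ and cyclicity of the trace, this yields
$$ \|[M^\ast,P](I-Q)\|_{HS}^2 = \|T(P-Q)\|_{HS}^2 = \mathrm{Tr}\big(T^\ast T(P-Q)\big). $$

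Next I would diagonalize. From the Schmidt expansion (\ref{schmidt}) one has $T^\ast T=\sum_j\kappa_j^2\,f_j\langle\cdot,f_j\rangle$ with $(f_j)$ an orthonormal basis of $P^2(\mu)$, so substituting and using $f_j\in P^2(\mu)$ gives
$$ \|[M^\ast,P](I-Q)\|_{HS}^2 = \sum_j\kappa_j^2\big(1-b_j\big), \qquad b_j:=\langle Qf_j,f_j\rangle. $$
The numbers $b_j$ are the diagonal entries of the orthogonal projection $Q$ in the basis $(f_j)$, whence $0\le b_j\le1$, and $\sum_j b_j=\mathrm{Tr}\,Q=d$ (the vectors $f_j$ span $P^2(\mu)$, and $Q$ annihilates $P^2(\mu)^\perp$). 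The asserted lower bound is therefore equivalent to the upper estimate $\sum_j\kappa_j^2 b_j\le\sum_{j<d}\kappa_j^2$. I would close this by the elementary rearrangement fact that a linear functional $\sum_j\kappa_j^2 b_j$ with non-increasing weights $\kappa_j^2$ is maximized over $\{0\le b_j\le1,\ \sum_j b_j=d\}$ by the greedy choice $b_j=1$ for $j<d$ and $b_j=0$ otherwise---precisely the content of the Ky Fan maximum principle. Its value $\sum_{j<d}\kappa_j^2$ gives the inequality; and taking $Q$ to be the projection onto $\mathrm{span}\{f_0,\dots,f_{d-1}\}$ realizes $b_j=1$ for $j<d$ and $b_j=0$ for $j\ge d$, so equality holds.

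The only point requiring genuine care is the justification that $0\le b_j\le1$ and $\sum_j b_j=d$ are the sole constraints relevant to the bound. For the inequality direction this is automatic: the diagonal of any orthogonal projection in any orthonormal basis lies in $[0,1]$ and sums to the rank, so no hidden constraint can strengthen the extremum against us, and the rearrangement argument then needs nothing further. The complementary attainment claim is not an optimization statement at all but is settled by exhibiting the explicit projection above. The residual items are routine: convergence of every series is guaranteed because $T$ is Hilbert-Schmidt, and one need only keep the index convention of (\ref{schmidt}) aligned with the statement.
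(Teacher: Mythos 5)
Your proof is correct and follows essentially the same route as the paper, which disposes of the corollary in one line by invoking the Courant--Fischer min-max principle: your reduction of $\|T(I-Q)\|_{HS}^2$ to $\mathrm{Tr}\,T^\ast T(P-Q)=\sum_j\kappa_j^2(1-b_j)$ with $0\le b_j\le 1$, $\sum_j b_j=d$, followed by the Ky Fan/rearrangement extremal argument, is precisely the standard implementation of that appeal. You have merely supplied the details the paper omits, including the correct observations that $P-Q$ is itself a projection and that $Q$ annihilates $P^2(\mu)^\perp$, and the attainment claim is settled, as in the paper, by the explicit projection onto $\mathrm{span}\{f_0,\dots,f_{d-1}\}$.
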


Although it might be inaccessible numerically to have the complete Schmidt expansion of the operator $T$, we pretend for a moment that expansion (\ref{schmidt}) is known.
Let $p \in \C[z,\overline{z}]$, so that
$$ [P M_p P, S] =  P M (I-P) M_p P$$
as before. Then
$$ \langle  P M (I-P) M_p P f_j, f_j \rangle = \langle p f_j , T f_j \rangle = \kappa_j \langle p f_j , g_j \rangle $$
hence
$$ {\rm Tr}  [P M_p P, S] = \lim_n \sum_{j=0}^n \kappa_j \int p(z,\overline{z}) f_j(z) \overline{g_j(z)} d\mu(z).$$
This observation is effective in case ${\rm rank} [S_\mu^\ast, S_\mu] < \infty$ when
$$  \sum_{j=0}^\infty \kappa_j f_j(z) \overline{g_j(z)} = \sum_{j=0}^d  \kappa_j f_j(z) \overline{g_j(z)} = \overline{ L(z,z)} $$
is an integrable function with respect to $\mu$.

These computations lead to a theoretical result, illustrating the key role played by the asymptotics of the singular numbers of the operator $T$.

\begin{theorem} Let $\mu$ be a positive Borel measure with compact support in $\C$ and let $p \in \C[z,\overline{z}]$. Assume that the singular numbers 
of Hankel's operator $[M^\ast,P]$ are in $\ell^1$. Then $L(z,z) \in L^1(\mu)$ and 
\begin{equation}
\frac{1}{\pi}  \int_{\Sigma(\mu)} \frac{\partial p}{\partial \overline{z}} (z,\overline{z}) dA(z) = \int p(z) \overline{L(z,z)} d\mu(z).
\end{equation}
\end{theorem}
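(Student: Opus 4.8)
The plan is to combine the trace formula of Helton--Howe--Carey--Pincus (\ref{integral}) with the Schmidt expansion (\ref{schmidt}) of the Hankel operator $T = [M^\ast, P]$, and the summability hypothesis $\sum_j \kappa_j < \infty$ serves precisely to justify interchanging the trace with the integration. First I would recall from the excerpt the identity
\[
{\rm Tr}\, [P M_p P, S] = \sum_{j=0}^\infty \kappa_j \int p(z,\overline{z}) f_j(z) \overline{g_j(z)}\, d\mu(z),
\]
where $(f_j)$ and $(g_j)$ are the Schmidt bases in $P^2(\mu)$ and its orthocomplement. The left-hand side, by the trace formula (\ref{integral}) applied in the form involving $\partial p/\partial\overline{z}$, equals $\frac{1}{\pi}\int_{\Sigma(\mu)} \frac{\partial p}{\partial \overline{z}}\, dA$. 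So the whole theorem reduces to identifying the right-hand sum with $\int p(z)\,\overline{L(z,z)}\, d\mu(z)$, which in turn amounts to proving that $L(z,z) = \sum_j \kappa_j \overline{f_j(z)} g_j(z)$ holds $[\mu]$-a.e. and that this diagonal restriction is genuinely in $L^1(\mu)$.

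The key analytic step is therefore to control the diagonal of the kernel $L$. From the Schmidt expansion the kernel is $L(z,w) = \sum_j \kappa_j g_j(z) \overline{f_j(w)}$ in the $L^2(\mu\otimes\mu)$ sense. I would estimate the candidate diagonal sum $\sum_j \kappa_j |f_j(z)|\,|g_j(z)|$ in $L^1(\mu)$ by Cauchy--Schwarz pointwise and then integrate: since $f_j$ and $g_j$ are unit vectors in $L^2(\mu)$,
\[
\int \sum_j \kappa_j |f_j(z)|\,|g_j(z)|\, d\mu(z) \leq \sum_j \kappa_j \Big(\int |f_j|^2 d\mu\Big)^{1/2}\Big(\int |g_j|^2 d\mu\Big)^{1/2} = \sum_j \kappa_j < \infty.
\]
This single inequality simultaneously shows that the series defining $\overline{L(z,z)}$ converges absolutely in $L^1(\mu)$, that the diagonal function lies in $L^1(\mu)$, and that Fubini/Tonelli applies so that the term-by-term integration against the bounded function $p$ is legitimate. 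I would then argue that the $L^1$-convergent series $\sum_j \kappa_j g_j(z)\overline{f_j(z)}$ must agree $[\mu]$-a.e. with the diagonal values of the $L^2(\mu\otimes\mu)$-kernel $L$, justifying the notation $\overline{L(z,z)}$; this is the point requiring the most care, since restricting an $L^2(\mu\otimes\mu)$ function to the diagonal is not automatically meaningful and must be understood through the specific convergent series representation rather than as a naive trace of an $L^2$ kernel.

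The main obstacle I anticipate is exactly this diagonal-restriction / convergence-mode bookkeeping: one has to pass from $L^2(\mu\otimes\mu)$ convergence of the partial sums (which gives $L$ only up to a $\mu\otimes\mu$-null set) to the pointwise/$L^1$ diagonal object, and ensure the two notions are compatible. The summability $\sum_j\kappa_j<\infty$ is what rescues this: it upgrades the expansion from Hilbert--Schmidt to trace class, so that the kernel has a genuine diagonal in the nuclear sense, matching the finite-rank computation at the end of the excerpt where $\overline{L(z,z)} = \sum_{j=0}^d \kappa_j f_j(z)\overline{g_j(z)}$ was already identified. Once the diagonal is controlled, swapping the trace and the integral against $p$ is routine, and combining with (\ref{integral}) yields the stated identity. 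I would close by noting that the result is the natural extension of the finite-rank case, the $\ell^1$ hypothesis being the minimal condition that turns the formal trace into an absolutely convergent integral.
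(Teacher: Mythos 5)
Your proposal is correct and takes essentially the same route as the paper: the paper's own proof consists precisely of the preceding displayed computation ${\rm Tr}\,[PM_pP,S]=\sum_{j}\kappa_j\int p\, f_j\overline{g_j}\,d\mu$ in the Schmidt basis, combined with the Helton--Howe--Carey--Pincus trace formula and the $\ell^1$ hypothesis on the $\kappa_j$ to sum the series. Your Cauchy--Schwarz bound in $L^1(\mu)$ and your reading of $\overline{L(z,z)}$ as the $L^1$-convergent series $\sum_j \kappa_j f_j\overline{g_j}$ (rather than a naive restriction of the $L^2(\mu\otimes\mu)$ kernel to the diagonal) simply make explicit the bookkeeping the paper leaves implicit.
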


Regardless to say that there are ample studies and criteria assuring a Hankel operator to be trace-class, see \cite{Peller-2003}.

\subsection{Weak approximation of the integral kernel}

In general, the integral kernel $L(z,w)$ representing the Hilbert-Schmidt operator $(I-P)M^\ast P : P^2(\mu) \longrightarrow L^2(\mu) \ominus P^2(\mu)$ is only an element of $L^2(\mu \times \mu)$.
Its structure is illuminated by an approximation in the weak topology of $L^2(\mu \times \mu)$ which involves truncated Christoffel-Darboux kernels.

To this aim, fix an element $f \in P^2(\mu)$ and remark that
$$ (I-P) M^\ast P f = \lim_n (I-P_n) M^\ast P f = \lim_n M^\ast P_{n+1} f - \lim_n P_n M^\ast P.
$$ But $P_n M^\ast = P_n M^\ast P_{n+1}$, hence the strong operator topology convergence
$$  (I-P) M^\ast P f = \lim_n ( M^\ast P_{n+1} - P_n M^\ast) f$$
holds true.

\begin{proposition}
The integral kernel $L(z,w)$ representing the Hankel operator $(I-P)M^\ast P$ admits the approximation:
\begin{equation}
\lim_n [ \overline{z} K^\mu_{n+1}(z,w) - \overline{w} K^\mu_n(z,w)] = L(z,w)
\end{equation}
in the weak topology of $L^2(\mu \times \mu)$. 
\end{proposition}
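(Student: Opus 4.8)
The plan is to recognize the two summands on the left as integral kernels of explicit finite-rank operators, and then to upgrade the strong-operator limit recorded just above the statement to weak convergence of kernels by supplying a uniform Hilbert--Schmidt bound. First I would identify the operators: since $(M^\ast P_{n+1} f)(z) = \overline{z}\int K^\mu_{n+1}(z,w) f(w)\,d\mu(w)$ and $(P_n M^\ast f)(z) = \int \overline{w} K^\mu_n(z,w) f(w)\,d\mu(w)$, the function $L_n(z,w) = \overline{z}K^\mu_{n+1}(z,w) - \overline{w}K^\mu_n(z,w)$ is exactly the integral kernel of the finite-rank operator $A_n = M^\ast P_{n+1} - P_n M^\ast$ on $L^2(\mu)$, while $L$ is the kernel of $T = (I-P)M^\ast P$. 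Thus the proposition asserts $A_n \to T$ weakly in the Hilbert--Schmidt class. By the standard criterion, weak convergence in the Hilbert space $L^2(\mu\otimes\mu)$ follows once I check two things: (i) a uniform bound $\sup_n \|L_n\|_{2,\mu\otimes\mu} = \sup_n \|A_n\|_{HS} < \infty$, and (ii) convergence of the pairings against the total family of product kernels $g(z)\overline{h(w)}$, with $g,h\in L^2(\mu)$. Since the pairing of $L_n$ with $g\otimes\overline{h}$ equals $\langle A_n h, g\rangle$, part (ii) reduces to $\langle A_n h, g\rangle \to \langle Th, g\rangle$ for all $h,g$.

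For (ii) I would split $h$ according to $L^2(\mu) = P^2(\mu)\oplus P^2(\mu)^\perp$. For $h\in P^2(\mu)$ the convergence $A_n h \to (I-P)M^\ast P h = Th$ is precisely the strong-operator limit established immediately before the statement. For $h \perp P^2(\mu)$ one has $Th = 0$ and $M^\ast P_{n+1}h = 0$, so $\langle A_n h, g\rangle = -\langle P_n M^\ast h, g\rangle = -\langle \overline{w}h, P_n g\rangle$; since $P_n g \to Pg$ and $P^2(\mu)$ is $M$-invariant, the limit is $-\langle h, M Pg\rangle = 0 = \langle Th,g\rangle$. This settles (ii) by linearity.

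The main obstacle is the uniform Hilbert--Schmidt bound (i). I would write $A_n = [M^\ast, P_n] + M^\ast(P_{n+1}-P_n)$, where the second term is the rank-one operator $M^\ast p_{n+1}\langle\,\cdot\,, p_{n+1}\rangle$, of Hilbert--Schmidt norm $\|\overline{z}p_{n+1}\|\le \|M\|$. For the commutator, whose kernel is $(\overline{z}-\overline{w})K^\mu_n(z,w)$, I would compute $\|[M^\ast,P_n]\|_{HS}^2 = \sum_{j\ge 0}\|[M^\ast,P_n]p_j\|^2$ in the orthonormal basis $(p_j)$, the contribution of $P^2(\mu)^\perp$ vanishing because $zp_k\in P^2(\mu)$. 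With the Hessenberg entries $h_{jk}=\langle zp_k,p_j\rangle$, which vanish for $j>k+1$, one finds $[M^\ast,P_n]p_j = (I-P)M^\ast p_j + \sum_{k>n}\overline{h_{jk}}\,p_k$ for $j\le n$, $[M^\ast,P_n]p_{n+1} = -\overline{h_{n+1,n}}\,p_n$, and $[M^\ast,P_n]p_j=0$ for $j>n+1$; hence
\[
\|[M^\ast,P_n]\|_{HS}^2 = \sum_{j\le n}\|Tp_j\|^2 + B_n + |h_{n+1,n}|^2, \qquad B_n := \sum_{j\le n}\sum_{k>n}|h_{jk}|^2 .
\]

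The crux is to control the off-diagonal block $B_n$. Setting $C_j=\|Sp_j\|^2=\sum_i|h_{ij}|^2$, $R_j=\|S^\ast p_j\|^2=\sum_k|h_{jk}|^2$ and using $\|Tp_j\|^2 = C_j-R_j$ (which follows from $\|M^\ast p_j\|=\|Mp_j\|$, as $M$ is normal on $L^2(\mu)$), I would compare the two block decompositions $\sum_{j\le n}R_j = D_n+B_n$ and $\sum_{j\le n}C_j = D_n + |h_{n+1,n}|^2$, with $D_n=\sum_{j,k\le n}|h_{jk}|^2$; the decisive point is that the lower-left block of the Hessenberg matrix reduces to the single subdiagonal entry $h_{n+1,n}$. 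Subtracting yields the identity $B_n = |h_{n+1,n}|^2 - \sum_{j\le n}\|Tp_j\|^2$, whence $B_n\le |h_{n+1,n}|^2\le\|M\|^2$ and, remarkably, $\|[M^\ast,P_n]\|_{HS}^2 = 2|h_{n+1,n}|^2\le 2\|M\|^2$. Consequently $\|A_n\|_{HS}\le \sqrt{2}\,\|M\| + \|M\|$ uniformly in $n$, establishing (i). Combining (i) with (ii) gives the asserted weak convergence $L_n\to L$ in $L^2(\mu\otimes\mu)$. The only delicate step is this uniform bound, and I expect its proof to hinge entirely on the interplay between the finite bandwidth below the diagonal of $S$ (Hessenberg structure) and the trace-class nature of $[S^\ast,S]$, which together force the off-diagonal block $B_n$ to collapse to a single bounded entry.
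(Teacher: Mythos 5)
Your proof is correct, and it is in fact more careful than what the paper records. The paper's entire justification for this proposition is the strong-operator computation immediately preceding it, namely $(I-P)M^\ast Pf=\lim_n\,(M^\ast P_{n+1}-P_nM^\ast)f$ for $f\in P^2(\mu)$, after which the weak convergence of the kernels is simply asserted; your step (ii) reproduces exactly this argument (your splitting over $P^2(\mu)\oplus P^2(\mu)^\perp$ is harmless, since $M^\ast$ leaves $P^2(\mu)^\perp$ invariant, so in fact $A_nh=0=Th$ there). The genuine addition is your step (i): strong operator convergence only yields convergence of the pairings $\langle L_n,\,g\otimes\overline{h}\rangle=\langle A_nh,g\rangle$ against the total family of product kernels, and upgrading this to weak convergence in $L^2(\mu\otimes\mu)$ does require the uniform Hilbert--Schmidt bound, which the paper leaves implicit. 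Your bookkeeping is sound: the identity $\|[M^\ast,P_n]\|_{HS}^2=2|h_{n+1,n}|^2$ is correct (one can cross-check it directly from the kernel $(\overline{z}-\overline{w})K_n(z,w)$, where the diagonal terms contribute $2\sum_{j\le n}\|zp_j\|^2$ and the cross terms $-2\sum_{j,k\le n}|h_{jk}|^2$), and your formula $B_n=|h_{n+1,n}|^2-\sum_{j\le n}\|Tp_j\|^2$ is precisely a rearrangement of the identity $\|[M^\ast,P]P_n\|^2_{HS}=|h_{n+1,n}|^2-\sum_{j\le n<k}|h_{jk}|^2$ that the paper establishes only later, in the polynomial approximation subsection, and never invokes for this proposition. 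For the record, there is a slightly shorter route to (i): using the paper's own observation $P_nM^\ast=P_nM^\ast P_{n+1}$, one has $A_n=(I-P_n)M^\ast P_{n+1}$, and summing $\|(I-P_n)M^\ast p_j\|^2$ over $j\le n+1$ with the same Hessenberg bookkeeping gives the exact value $\|A_n\|_{HS}=\|\overline{z}\,p_{n+1}\|_{2,\mu}\le\|M\|$, avoiding the commutator-plus-rank-one decomposition (and improving your constant $(\sqrt{2}+1)\|M\|$ to $\|M\|$). Either way, your uniform bound holds and the conclusion stands; if anything, your write-up supplies the one step the paper's terse presentation glosses over.
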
 

This observation is consistent with
    $$ \lim_n  \frac{\partial M^\ast P_{n+1} f}{\partial \overline{z}} = \frac{\partial (I-P)M^\ast P f}{\partial \overline{z}} = Pf$$
in the even weaker sense of distributions.

\subsection{Polynomial approximation}

We specialize below the finite rank approximation of the commutator $[M^\ast, P]$ to the filtration by polynomial
subspaces $\C_n[z]$ labelled by the degree. The notation introduced in the previous sections is unchanged: $P_n$ is the orthogonal
projection onto $\C_n[z]$, Christoffel-Darboux kernel of order $n$ is $K_n(z,w)$, Hessenberg's matrix associated to the orthogonal polynomials
$(p_j)$ is $[h_{jk}]$.

The rate of convergence in Hilbert-Schmidt norm of $[M^\ast,P]P_n$ to $[M^\ast, P]$ controls via Theorem \ref{gap} the finite central convergence in the trace formula.
For a fixed degree $j \geq 0$ one finds
$$ \| [M^\ast, P] p_j \|^2 = \| (I-P) M^\ast p_j \|^2 = \| \overline{z} p_j \|^2 - \| P ( \overline{z} p_j ) \|^2 = \| z p_j \|^2 - \| P ( \overline{z} p_j ) \|^2 = $$ $$
\sum_{k \leq j+1} |h_{kj}|^2 - \sum_{k \geq j-1} |h_{jk}|^2.$$
That is, the square norm of the $j$-th column in the Hessenberg matrix minus the square norm of the $j$-th row.
Therefore
\begin{equation}\label{first n}
\| [M^\ast,P]P_n \|^2_{HS} =  \sum_{j \leq n} \| [M^\ast, P] p_j \|^2 = |h_{n+1,n}|^2 - \sum_{j \leq n<k} |h_{jk}|^2.
\end{equation}
We know that as a function of $n$ this is an increasing sequence, converging to ${\rm Tr} [S^\ast, S] = \frac{ {\rm Area} \Sigma(\mu)}{\pi}$.

At the level of integral formulas one puts in motion the Christoffel-Darboux kernel. Denoting by $L_n(z,w)$ the integral kernel of  $[M^\ast,P]P_n$ 
one finds
$$ L_n(z,w) = \int L(z,\zeta) K_n(\zeta,w) d\mu(\zeta), \ \ n \geq 0,$$
and one step further, approximating the full projection $P$ by $P_N$ with $N \rightarrow \infty$
we infer a formula in terms only of the moments of the original measure $\mu$.

\begin{proposition} The integral kernel $L_n$ of the finite rank operator  $[M^\ast,P]P_n$ 
admits the representation
\begin{equation}
L_n(z,w) = \lim_N L_{N,n}(z,w),$$
$$ L_{N,n}(z,w) : = \int K_N(z,\zeta) (\overline{z} - \overline{\zeta}) K_n(\zeta, w) d\mu(\zeta)
\end{equation}
with the error estimate
\begin{equation}\label{far-corner}
\| L_n(z,w) - L_{N,n}(z,w) \|^2_{2, \mu \otimes \mu} = 
$$ $$ \| (P_N - P) M^\ast P_n \|^2_{HS} = \sum_{j \leq n<N < k} |h_{jk}|^2.
\end{equation}
\end{proposition}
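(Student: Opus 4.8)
The plan is to move everything to the operator side, where both $L_{N,n}$ and $L_n$ become transparent finite-rank integral operators, and only translate back into Hessenberg entries at the very end.

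First I would identify the operator represented by the kernel $L_{N,n}$. Since $K_n$ and $K_N$ are the reproducing kernels of $\C_n[z]$ and $\C_N[z]$, the projections act as integral operators against them, $(P_m g)(z) = \int K_m(z,\zeta) g(\zeta)\,d\mu(\zeta)$. Splitting the factor $(\overline{z}-\overline{\zeta})$ and setting $g = P_n f$, I would compute
$$ \int K_N(z,\zeta)(\overline{z}-\overline{\zeta})(P_n f)(\zeta)\,d\mu(\zeta) = \overline{z}\,(P_N P_n f)(z) - (P_N M^\ast P_n f)(z), $$
using that $\overline{\zeta}\,g = M^\ast g$. For $N \geq n$ one has $P_N P_n = P_n$ and $\overline{z}(P_n f)(z) = (M^\ast P_n f)(z)$, so the right-hand side is $((I-P_N)M^\ast P_n f)(z)$. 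Hence $L_{N,n}$ is the integral kernel of $(I-P_N)M^\ast P_n$, while $L_n$ is, by the preceding discussion, the kernel of $[M^\ast,P]P_n = (I-P)M^\ast P_n$. Subtracting, $L_n - L_{N,n}$ is the kernel of $(P_N - P)M^\ast P_n$.

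Second, I would invoke the standard isometry between Hilbert-Schmidt integral operators and their kernels in $L^2(\mu\otimes\mu)$ to obtain $\|L_n - L_{N,n}\|_{2,\mu\otimes\mu}^2 = \|(P_N-P)M^\ast P_n\|_{HS}^2$; the operator is even finite rank because of the factor $P_n$. To evaluate this norm I would use the orthonormal basis $(p_k)$, so that only $p_0,\dots,p_n$ survive $P_n$ and $\|(P_N-P)M^\ast P_n\|_{HS}^2 = \sum_{j=0}^n \|(P_N-P)M^\ast p_j\|^2$. Expanding $M^\ast p_j = \overline{z}\,p_j$ via $\langle M^\ast p_j, p_k\rangle = \overline{h_{jk}}$, the components with index $k \leq N$ are common to $P_N M^\ast p_j$ and $P M^\ast p_j$ and cancel in the difference, leaving $(P_N-P)M^\ast p_j = -\sum_{k>N}\overline{h_{jk}}\,p_k$. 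Taking norms and summing over $j \leq n$ produces exactly $\sum_{j\leq n<N<k}|h_{jk}|^2$.

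Finally, the same bookkeeping yields the convergence claim: each row of the Hessenberg matrix is square-summable, indeed $\sum_k |h_{jk}|^2 = \|P M^\ast p_j\|^2 \leq \|M\|^2$, and the outer sum over $j \leq n$ is finite, so the tail $\sum_{j\leq n,\,k>N}|h_{jk}|^2 \to 0$ as $N\to\infty$, giving $L_{N,n}\to L_n$ in $L^2(\mu\otimes\mu)$. I expect the only genuinely delicate point to be the first step, namely converting the $(\overline{z}-\overline{\zeta})$ kernel into the operator $(I-P_N)M^\ast P_n$ while keeping straight which variable each projection acts in, together with the Hessenberg vanishing pattern that justifies the cancellation in the second step; the remaining estimates are routine.
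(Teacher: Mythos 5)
Your proof is correct and follows essentially the same route as the paper: you identify $L_{N,n}$ as the integral kernel of $(I-P_N)M^\ast P_n$ (valid for $N \geq n$), invoke the Hilbert--Schmidt kernel isometry, and expand $(P_N-P)M^\ast p_j$ in the orthonormal basis via $\langle M^\ast p_j, p_k\rangle = \overline{h_{jk}}$ to get $\sum_{j\leq n<N<k}|h_{jk}|^2$, exactly as the paper does in compressed form around identities (\ref{first n}) and (\ref{far-corner}). One small remark: the cancellation of the components with $k\leq N$ is pure orthonormal-basis bookkeeping and does not actually use the Hessenberg vanishing pattern, contrary to your closing comment.
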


Given a polynomial $R(z,\overline{z}) \in \C[z, \overline{z}]$ it will be convenient to separate the powers of $\overline{z}$:
$$ R(z,\overline{z}) = R_0{z} + R_1(z) \overline{z} + \ldots + R_d(z) \overline{z}^d. $$
The identity
$$ [ R(S,S^\ast), S] = [S^\ast, S] R_1(S) + [S^{\ast 2}, S] R_2(S) + \ldots + [S^{ast d}, S] R_d(S)$$
implies, thanks to the relation $PMP = MP$ and the ciclicity invariance of trace:
$$ {\rm Tr}  [ R(S,S^\ast), S] =  {\rm Tr} \ P M (I-P) M^\ast R_1(M) + \ldots  + {\rm Tr} \ P M (I-P) M^{\ast  d} R_d(M).$$
Denoting
$$ \tilde{R}(\overline{z}; w, \overline{w}) = \sum_{j=1}^d \frac{\overline{z}^j - \overline{w}^j}{\overline{z} - \overline{w}} R_j(w),$$
formula \ref{integral} becomes:
$$ {\rm Tr}  [ R(S,S^\ast), S] = \int \int |L(z,w)|^2  \tilde{R}(\overline{z}; w, \overline{w}) d\mu(z) d\mu(w).$$
Notice that $\tilde{R}$ is a polynomial in all variables and its restriction to the diagonal $z=w$ coincides with $\frac{\partial R}{\partial \overline{z}} (z,\overline{z}).$

The main result follows. We denote by $\hat{\sigma}$ the polynomial convex hull of a closed set $\sigma \subset \C$.

\begin{theorem}\label{main} Let $\mu$ be a positive Borel measure with compact support $\sigma$ in $\C$, with associated Hessenberg matrix $(h_{jk})_{j,k=0}^\infty$ and Christoffel-Darboux kernels
$K_n(z,w), n \geq 0.$ The moments of the area measure supported by the cloud of $\mu$ can be computed by the formula:
\begin{equation}\label{Nn}
\frac{1}{\pi} \int_{\Sigma(\mu)} \frac{\partial R}{\partial \overline{z}} (z,\overline{z}) dA(z) = $$ $$
\lim_n \lim _N  \int  R(z,\overline{z}) [z K_n (z,z) - \int K_n (z,\zeta)\zeta K_N(\zeta, z) d\mu(\zeta)] d\mu(z)
\end{equation}
where $R \in \C[z,\overline{z}]$.

For fixed values of $n < N$ the error $\epsilon_{N,n} = \epsilon_{N,n}(\mu)$ in the above limit satisfies:
$$ \epsilon_{N,n}^2 \leq  \frac{ {\rm Area}\  \hat{\sigma}}{\pi} \|\tilde{R}\|^2_{\infty, \sigma \times \sigma} [(\sum_{j >n} s_j) + \sum_{j \leq n<N < k} |h_{jk}|^2],$$
where
$$ s_j = h_{j+1,j}^2 - h_{j,j-1}^2 + \sum_{\ell < j < k} (|h_{\ell j}|^2- |h_{jk}|^2)$$
and   $ \pi \sum_{j=0}^\infty s_j = {\rm Area}\  \Sigma(\mu)$.
\end{theorem}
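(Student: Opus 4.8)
\emph{Approach.} My plan is to read both sides as traces of finite-rank operators and to reduce every passage to a Hilbert--Schmidt estimate, never passing a limit through a trace directly. On the left, since $R(S,S^\ast)-PM_RP$ is trace class and analytic polynomials commute with $S$, the commutator trace is unchanged by the reordering, so the Helton--Howe formula (\ref{HH}) gives $\frac1\pi\int_{\Sigma(\mu)}\frac{\partial R}{\partial\overline z}\,dA=\mathrm{Tr}[PM_RP,S]$. Using $[PM_RP,S]=PM(I-P)M_RP=T^\ast T_R$ with $T=(I-P)M^\ast P$ and $T_R=(I-P)M_RP$, this equals the Hilbert--Schmidt pairing $\langle T_R,T\rangle_{HS}=\mathrm{Tr}(T^\ast T_R)$. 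On the right, expanding $K_n,K_N$ in the orthonormal polynomials and inserting $\langle zp_k,p_j\rangle=h_{jk}$, a direct bookkeeping identifies the bracket as $\overline{L_{N,n}(z,z)}$, the conjugate diagonal of the kernel of $(I-P_N)M^\ast P_n$, and shows $\int R(z,\overline z)\,\overline{L_{N,n}(z,z)}\,d\mu(z)=\mathrm{Tr}\,M(I-P_N)M_RP_n$. Thus the inner right-hand quantity $\tau_{N,n}$ is the doubly truncated version of $\langle T_R,T\rangle_{HS}$ obtained by replacing $P$ with $P_N$ in the middle and compressing the input by $P_n$.

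For the double limit I would first let $N\to\infty$ with $n$ fixed: because $M_RP_n$ has finite rank and $P_N\to P$ strongly, $\mathrm{Tr}\,M(P-P_N)M_RP_n\to0$ in trace norm, so $\lim_N\tau_{N,n}=\tau_n:=\mathrm{Tr}\,M(I-P)M_RP_n=\mathrm{Tr}\,P_n[PM_RP,S]P_n$. The outer limit $\tau_n\to\mathrm{Tr}[PM_RP,S]$ is exactly the content of Theorem \ref{gap} with $Q=P_n$; it cannot be read off from a strong limit inside the trace, since the error operators tend to $0$ strongly while their trace tends to the nonzero cloud integral.

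For the error at fixed $n<N$ I would split $\mathrm{Tr}[PM_RP,S]-\tau_{N,n}=(\,\text{true}-\tau_n\,)+(\,\tau_n-\tau_{N,n}\,)$. The first difference is $\mathrm{Tr}\,(I-P_n)[M,P][M_R,P]$, which Theorem \ref{gap} controls by $\|(I-P_n)[M,P]\|_{HS}\,\|[M_R,P]\|_{HS}$, where $\|(I-P_n)[M,P]\|_{HS}^2=\sum_{j>n}s_j$ by (\ref{first n}). The second difference is the $N$-truncation error, which I would write as a Hilbert--Schmidt pairing against $(P_N-P)M^\ast P_n$ and bound using (\ref{far-corner}), whose squared norm is $\sum_{j\le n<N<k}|h_{jk}|^2$. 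Since the two error operators are carried by the complementary ranges of $P_n$ and $I-P_n$, their Hilbert--Schmidt cross term contains the factor $P_n(I-P_n)=0$ and vanishes; hence a single Cauchy--Schwarz gives $\epsilon_{N,n}^2\le\|[M_R,P]\|_{HS}^2\big[\sum_{j>n}s_j+\sum_{j\le n<N<k}|h_{jk}|^2\big]$. Finally, by (\ref{commutator}) the kernel of $[M_R,P]$ is $L(z,w)$ times a difference quotient; subtracting an analytic polynomial from $R$ (which alters neither the trace nor, for the relevant truncations, the estimate) replaces that quotient by $\tilde R$, so $\|[M_R,P]\|_{HS}\le\|L\|_{2,\mu\otimes\mu}\|\tilde R\|_{\infty,\sigma\times\sigma}$, and $\|L\|_{2,\mu\otimes\mu}^2=\mathrm{Tr}[S^\ast,S]=\mathrm{Area}\,\Sigma(\mu)/\pi\le\mathrm{Area}\,\hat\sigma/\pi$ because $\Sigma(\mu)\subset\hat\sigma$. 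The stated bound follows, and the double-limit identity is its $N,n\to\infty$ consequence.

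The main obstacle is precisely this trace anomaly: both truncation errors vanish in the strong operator topology while their traces do not, so the entire analysis must stay at the Hilbert--Schmidt level. The delicate point is to arrange that the $N$-truncation error is dominated by the \emph{Hankel} quantity $\|\tilde R\|_{\infty,\sigma\times\sigma}\sqrt{\mathrm{Area}\,\hat\sigma/\pi}$ and not merely by a coarser Toeplitz bound. This is what forces the finite-Hankel reading $\tau_{N,n}=\langle (I-P_N)M_RP_N\,P_n,\,(I-P_N)M^\ast P_N\rangle_{HS}$, the attachment of $P_n$ to $(P_N-P)M^\ast P_n$ so that (\ref{far-corner}) applies, and the verification that the two contributions are mutually Hilbert--Schmidt orthogonal, so that they add rather than compound under one Cauchy--Schwarz.
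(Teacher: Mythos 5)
Your proposal follows the paper's own route step for step: the reduction $\frac{1}{\pi}\int_{\Sigma(\mu)}\frac{\partial R}{\partial\overline z}\,dA=\mathrm{Tr}\,PM(I-P)M_R$ via the Helton--Howe--Carey--Pincus formula, the identification of the doubly truncated trace $\tau_{N,n}=\mathrm{Tr}\,P_nM(I-P_N)M_R=\int R(z,\overline z)\,\overline{L_{N,n}(z,z)}\,d\mu(z)$ with the Christoffel--Darboux expression in (\ref{Nn}), and the error control through (\ref{estimate}) combined with the orthogonal splitting $\|(I-P_N)M^\ast P_n-(I-P)M^\ast P\|_{HS}^2=\|(P-P_N)M^\ast P_n\|_{HS}^2+\|(I-P)M^\ast(P-P_n)\|_{HS}^2$, evaluated through (\ref{first n}) and (\ref{far-corner}). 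Your two-stage limit argument (trace-norm convergence as $N\to\infty$ at fixed $n$ because $M_RP_n$ has finite rank, then Theorem \ref{gap} with $Q=P_n$) is a correct elaboration of what the paper leaves implicit, and your warning that the limits cannot be passed through the trace is well taken.

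There is, however, a genuine gap in the $N$-truncation half of your error bound. You claim that $\tau_n-\tau_{N,n}$ is a Hilbert--Schmidt pairing against $(P_N-P)M^\ast P_n$ whose other factor is (a compression of) $[M_R,P]$, so that a single Cauchy--Schwarz with $\|[M_R,P]\|_{HS}\le\|L\|_{2,\mu\otimes\mu}\|\tilde R\|_{\infty,\sigma\times\sigma}$ closes the estimate. In fact $\tau_n-\tau_{N,n}=-\langle (P-P_N)M_RP_n,\ (P-P_N)M^\ast P_n\rangle_{HS}$, and the left factor here is a \emph{Toeplitz} block of $M_R$ acting inside $P^2(\mu)$, not a Hankel/commutator piece: one has $(P-P_N)[M_R,P]P_n=0$ identically, because $(P-P_N)P=P-P_N$, so no commutator reading of this term exists. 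Consequently $\|(P-P_N)M_RP_n\|_{HS}$ is not dominated by $\|\tilde R\|_\infty$-data: for \emph{analytic} $R$ of degree $d$ one has $\tilde R=0$, yet $\tau_n-\tau_{N,n}=-\sum_{j\le n<N<k}\langle Rp_j,p_k\rangle\,h_{jk}$, which is generically nonzero whenever $N<n+d$ and the far-corner Hessenberg entries do not all vanish. Your parenthetical that subtracting an analytic polynomial ``alters neither the trace nor, for the relevant truncations, the estimate'' is exactly where this hides: $[p,P]$ vanishes only on $P^2(\mu)$, and $(P-P_N)M_pP_n$ need not vanish once $\deg p>N-n$. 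The repair is to work in the regime $N\ge n+\deg R$ (harmless for the iterated limit, which sends $N\to\infty$ first), where the analytic part drops out, and then to bound the remaining blocks $(P-P_N)S^{\ast k}R_k(M)P_n$ by far-corner data --- a step about which the paper's own one-line derivation is equally silent, so your reconstruction faithfully reproduces the published argument's level of rigor but does not supply the missing lemma. A smaller point: since your two pairings have different left factors, orthogonality of the two error operators by itself only yields $|\epsilon_{N,n}|\le C(\|E_1\|_{HS}+\|E_2\|_{HS})$, i.e.\ the stated bound with an extra factor $\sqrt2$; the clean sum of squares requires exhibiting one common left factor, which is again obstructed by the Toeplitz block.
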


\begin{proof} By relaxing the two projectors $P$ appearing in the trace formula
$$ \frac{1}{\pi} \int_{\Sigma(\mu)} \frac{\partial R}{\partial \overline{z}} (z,\overline{z}) dA(z) = {\rm Tr} [P M_R P, P M P] = $$ $$ {\rm Tr} P M (I-P) M_R P = {\rm Tr} P M (I-P) M_R ,$$
one finds the approximate values
$$  {\rm Tr} P_n M (I-P_N) M_R = {\rm Tr} R P_n M (I-P_N),$$
which provide the kernels (\ref{main}) in the statement. The error bound is derived from (\ref{estimate}) and the Hilbert-Schmidt norm identity
$$ \| (I-P_N) M^\ast P_n - (I-P) M^\ast P \|_{HS}^2 = $$ $$ \| (P-P_N) M^\ast P_n \|^2_{HS} + \| (I-P) M^\ast (P-P_n) \|^2_{HS}.$$
The expressions in terms of Hessenberg matrix entries are consequences of identities (\ref{first n}) and (\ref{far-corner}).

\end{proof}

Helton and Howe trace formula (\ref{HH}) is coordinate free, and moreover, it is invariant under additive Hilbert Schmidt perturbations $S+L$ of the operator $S$, provided
the self-commutator $[S^\ast + L^\ast, S+L]$ remains trace-class \cite{Voiculescu-1980'}. In particular, an adapted sequence of finite rank orthogonal projections converging strongly to the identity
operator may well improve the estimates in Theorem \ref{main}. We discuss an example in this direction.

\begin{corollary}\label{perturbation} Let $\mu$ be a positive measure of compact support and let $\nu$ be a finite atomic measure supported by the complement of the polynomial hull of ${\rm supp} \mu$.
There exists a constant $\rho>1$ with the property
\begin{equation}\label{ext-points}
 \epsilon^2_{N,n}(\mu + \nu) \leq  \epsilon^2_{N,n}(\mu) + O(\rho^{-n}), \ n \rightarrow \infty,
\end{equation}
with the second term independent of $N >n$.
\end{corollary}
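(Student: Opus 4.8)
The plan is to exploit the fact that the atoms of $\nu$, lying outside $\hat{\sigma}$, enter the subnormal operator only as an orthogonal \emph{normal} summand, so that the entire discrepancy between the two error terms is traceable to the way the polynomial filtration sits relative to this splitting. First I would establish the orthogonal decomposition $S_{\mu+\nu} = S_\mu \oplus D$, where $D = \mathrm{diag}(a_1,\dots,a_m)$ acts on the span of the atom--evaluations of $\nu = \sum_i c_i \delta_{a_i}$. Since each $a_i$ lies outside the polynomial hull of $\sigma = \mathrm{supp}\,\mu$, polynomial convexity furnishes a polynomial $q$ with $q(a_j)=\delta_{ij}$ and $\sup_\sigma|q|$ arbitrarily small; taking powers of a separating polynomial shows moreover that such $q$ may be chosen of degree $\le n$ with $\sup_\sigma|q|\le C\tau^n$ for a fixed $\tau<1$. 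This gives simultaneously that $P^2(\mu+\nu)=P^2(\mu)\oplus \C^m$ is an orthogonal splitting on which multiplication by $z$ is $S_\mu\oplus D$, that $P^{\mu+\nu}=P^\mu\oplus I$, and hence that the Hankel operator is exactly $[M^*,P^\mu]\oplus 0$ and $[S_{\mu+\nu}^*,S_{\mu+\nu}]=[S_\mu^*,S_\mu]\oplus 0$.

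Consequently $\Sigma(\mu+\nu)=\Sigma(\mu)$ up to a null set, the areas agree, and $\sum_j s_j(\mu+\nu)=\sum_j s_j(\mu)$. In particular the exact left--hand side of (\ref{Nn}) is the same for $\mu$ and $\mu+\nu$, and the prefactor $\tfrac1\pi\,\mathrm{Area}\,\hat\sigma\,\|\tilde R\|^2_\infty$ is unchanged as well: adding finitely many exterior points creates no new bounded complementary component, so $\mathrm{Area}\,\widehat{\sigma\cup\{a_i\}}=\mathrm{Area}\,\hat\sigma$, and since the kernel $L^{\mu+\nu}=L^\mu\oplus 0$ is supported on $\sigma\times\sigma$ the Besov seminorm may still be taken over $\sigma$. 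By the triangle inequality the corollary then reduces to the uniform--in--$N$ estimate $|A_{N,n}(\mu+\nu)-A_{N,n}(\mu)|=O(\tau^n)$ for the computable approximants $A_{N,n}={\rm Tr}\,P_nM(I-P_N)M_R$; the bounded cross term is absorbed and one obtains the statement with $\rho=\tau^{-1}>1$.

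The only object that fails to split is the degree filtration: the Krylov spaces $\C_n[z]\subset P^2(\mu+\nu)$ are tilted with respect to $P^2(\mu)\oplus\C^m$. I would control this tilt through the rank $\le m$ Sherman--Morrison update of the reproducing kernel, which in the one--atom case reads $K_n^{\mu+\nu}(z,w)=K_n^\mu(z,w)-\frac{c\,K_n^\mu(z,a)\overline{K_n^\mu(w,a)}}{1+c\,K_n^\mu(a,a)}$, with analogous iterated corrections for several atoms. Two features are decisive: the correction is supported in $\C_n[z]\times\C_n[z]$, and its size is governed by $K_n^\mu(a_i,a_i)=\sum_{j\le n}|p_j^\mu(a_i)|^2$, which grows geometrically because $a_i\notin\hat\sigma$. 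The first feature confines the perturbation to degrees $\le n$, so that its coupling through $M$ (a one--step degree shift) to the far modes $>N$ passes only through a geometrically small overlap; the second quantifies the rate $\tau^n$ at which the high--degree orthonormal polynomials for $\mu+\nu$ relax onto those for $\mu$. Feeding this into the Helton--Howe commutator trace, and using that the commutator part is precisely $[M^*,P^\mu]\oplus 0$, yields the claimed bound on $|A_{N,n}(\mu+\nu)-A_{N,n}(\mu)|$ with a constant independent of $N$.

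The main obstacle is exactly the bookkeeping of this last step. The low--degree orthonormal polynomials for $\mu+\nu$ differ from those for $\mu$ by $O(1)$, not $O(\tau^n)$, so the estimate cannot be read off entry by entry from the Hessenberg matrices; one must argue at the level of the full trace, equivalently of $\|(I-P_N)M^*P_n-(I-P)M^*P\|_{HS}$, where the $O(1)$ low--degree discrepancies either cancel or are annihilated by the projection onto the far corner $>N$. For the near tail $\sum_{j>n}s_j$ the conservation identity $\sum_j s_j(\mu+\nu)=\sum_j s_j(\mu)$ is the right device, turning a term--by--term comparison into one of partial sums that converge at rate $\tau^n$; for the far corner the Sherman--Morrison confinement to $\C_n[z]$ is the right device; and reconciling the change of measure inside the Hilbert--Schmidt inner products is the remaining delicate point. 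The constant is $\rho=\tau^{-1}=\min_i\big(|P_i(a_i)|/\sup_\sigma|P_i|\big)^{1/\deg P_i}$ over separating polynomials $P_i$, of which only the existence, guaranteed by $a_i\notin\hat\sigma$, is needed.
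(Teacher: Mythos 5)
Your structural skeleton matches the paper's proof: reduction to a single atom, the unitary identification $J: P^2(\mu+\nu) \to P^2(\mu)\oplus P^2(\nu)$ (with dense range because the atoms lie outside the polynomial hull), the observation that the Hankel operator becomes $[M^\ast,P]\oplus 0$ so the atomic summand is invisible to the self-commutator, and the geometric decay $\Lambda^\mu_n(a) = O(\rho^{-n})$ as the source of the rate. The gap is precisely at the step you yourself flag as ``bookkeeping.'' You reduce the corollary to the uniform-in-$N$ estimate $|A_{N,n}(\mu+\nu)-A_{N,n}(\mu)| = O(\tau^n)$ on the truncated traces, and this is never proved; it is also strictly stronger than what the statement requires. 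The tool you propose for it cannot deliver it on its own: in the Uvarov/Sherman--Morrison update the correction to $K_n$ has, after normalization, coefficient $c\,K_n^\mu(a,a)/(1+c\,K_n^\mu(a,a)) \to 1$, so the perturbation of the projection $\pi_n$ is a rank-one operator of norm $O(1)$, not $O(\tau^n)$ --- as you concede when noting the low-degree orthonormal polynomials differ by $O(1)$. Any argument that measures the kernel perturbation in norm therefore fails; the geometric smallness lives only in one specific component, and your proposal never isolates it.

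The paper avoids comparing approximants altogether: it compares the Hilbert--Schmidt bounds of Theorem \ref{main} directly, and the single computation replacing your bookkeeping is an exact orthogonal decomposition of $J\C_n[z]$ inside $P^2(\mu)\oplus P^2(\nu)$, namely $J\C_n[z] = \C\, J K_n^\mu(\cdot,a) \oplus J\bigl((z-a)\C_{n-1}[z]\bigr)$. This works because $K_n^\mu(\cdot,a)$ reproduces values at $a$, hence is $(\mu+\delta_a)$-orthogonal to polynomials vanishing at $a$; on the hyperplane $(z-a)\C_{n-1}[z]$ the $\mu+\delta_a$ and $\mu$ inner products coincide, so its contribution to $\|(I-P)M^\ast \pi_n\|^2_{HS}$ is dominated by the unperturbed quantity $\|(I-P)M^\ast P_n\|^2_{HS}$; and the $\mu$-component of the normalized tilt vector has squared norm exactly $\Lambda^\mu_n(a)/(1+\Lambda^\mu_n(a)) = O(\rho^{-n})$, which is the whole second term in (\ref{ext-points}), manifestly independent of $N$. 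If you wish to salvage your route, drop the $A_{N,n}$-comparison and locate this one-dimensional tilt inside your rank-one update: all the $O(1)$ discrepancy is carried by the direction $J K_n^\mu(\cdot,a)$, whose image under the Hankel operator is small only because that operator annihilates the atom coordinate, where almost all of the vector's mass sits.
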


\begin{proof} By finite recurrence and normalization we can assume $\nu$ is a Dirac mass at a point $a$.

The map
$$ J: P^2(\mu + \nu) \longrightarrow P^2(\mu) \oplus P^2(\nu), \ \ f \mapsto (f,f),$$
is isometric and has dense range by Runge approximation theorem. Hence $J$ is a unitary transformation.
Denote by $\pi_n$ the orthogonal projection of $P^2(\mu + \nu)$ onto $\C_n[z]$.

The essential Hilbert-Schmidt norm involved in the bound of $ \epsilon_{N,n}(\mu + \nu)$ is
$$ \| \begin{pmatrix}
        I - P & 0\\
        0   & 1-1 \end{pmatrix} \begin{pmatrix}
                                             M^\ast & 0\\
                                              0 &  \overline{a}\end{pmatrix}    \pi_n \|_{HS}.$$
                                                                                               Fix a positive integer $n$, and consider the orthogonal decomposition of $J \C_n[z]$:
$$ (K^\mu_n(z,a), K^\mu_n(a,a)) \C \oplus J ( (z-a)\C_{n-1}[z]).$$
The normalized vector
$$ (\frac{K^\mu_n(z,a)}{\sqrt{K^\mu_n(a,a) + K^\mu_n(a,a)^2}},  \frac{K^\mu_n(a,a)}{\sqrt{K^\mu_n(a,a) + K^\mu_n(a,a)^2}})$$
and an orthonormal basis of the subspace $(z-a)\C_{n-1}[z] \subset P^2(\mu)$, simultaneously orthonormal via the unitary map $J$ in $P^2(\mu) \oplus P^2(\nu)$,
produce the estimate
$$ \|  \begin{pmatrix}
        (I - P)M^\ast & 0\\
        0   & 0 \end{pmatrix}  \pi_n \|^2_{HS} \leq $$ $$
                                               \| (I-P) M^\ast \frac{K^\mu_n(z,a)}{\sqrt{K^\mu_n(a,a) + K^\mu_n(a,a)^2}}\|^2 + \| (I-P)M^\ast P_n\|^2.$$
Since the point $a$ does not belong to the polynomial convex hull of $ {\rm supp}(\mu)$  the variational definition of Christoffel function
$ \Lambda^\mu_n (a) $ implies 
$$ \Lambda^\mu_n (a) \leq C \rho^{-n},$$
for some constants $C>0$ and $\rho >1$. The identity
$$\| \frac{K^\mu_n(z,a)}{\sqrt{K^\mu_n(a,a) + K^\mu_n(a,a)^2}} \|^2_\mu  = \frac{K^\mu_n(a,a)}{K^\mu_n(a,a) + K^\mu_n(a,a)^2} =
\frac{\Lambda^\mu_n(a)}{1+ \Lambda^\mu_n(a)}$$    
completes the proof.                                       
\end{proof}

\section{Pad\'e type approximation scheme in 2D}

The reconstruction of the cloud of a measure $\mu$ from its moments can be completed via different paths: geometric tomography, Bergman space methods
\cite{GPSS-2009} (if applicable), identification of potential real-algebraic boundary \cite{Lasserre-Putinar-2015}, curve fitting along the boundary \cite{Ammari-2019}, or by exploiting the same hyponormal operators tools invoked in the previous sections. We reproduce from our preceding article a few details on the latter approximation scheme.

Let $g \in L^1_{\rm comp}(\C, dA), \ \ 0 \leq g \leq 1,$ be a fixed shade function, and think of it as the characteristic function of the shade $\Sigma(\mu)$ of a measure.
We consider the moments 
$$ 
a_{k\ell} = \int_\C \zeta^k \overline{\zeta}^\ell g(\zeta) {\rm dA}(\zeta), \ \ k,l \geq 0,
$$
given. They can be organized in the exponential of a formal generating series:
$$ E_g(w,z) = \exp [\frac{-1}{\pi} \sum_{k,\ell=0}^\infty \frac{a_{k\ell}}{w^{k+1} \overline{z}^{\ell+1}}].$$
This is of course the power expansion at infinity of the double Cauchy integral appearing in (\ref{det}).
Denote in short:
$$ E_g(w,z) =  \exp ( \frac{-1}{\pi} \int_\C \frac{ g(\zeta) {\rm dA}(\zeta)}{(\zeta-w)(\overline{\zeta}-\overline{z})}), \ \ z,w \in \C, \ z\neq w.$$

We recall a few of the properties of the {\it exponential transform} $E_g$:
\bigskip

a). The function $E_g$ can be extended by continuity to $\C^2$ by assuming the value $E_g(z,z) = 0$ whenever $ \int_\C \frac{ g(\zeta) {\rm dA}(\zeta)}{|\zeta-z|^2} = \infty;$

b). The function $E_g(w,z)$ is analytic in $w \in \C \setminus {\rm supp} (g)$ and antianalytic in $z \in \C \setminus {\rm supp} (g)$;

c). The kernel $1-E_g(w,z)$ is positive semi-definite in $\C^2$;

d). The behavior at infinity contains as a first term the Cauchy transform of $g$:
$$ E_g(w,z) = \frac{1}{\overline{z}} [\frac{-1}{\pi} \int_\C \frac{ g(\zeta) {\rm dA}(\zeta)}{\zeta-w}] + O(\frac{1}{|z|^2}), \ \ |z| \rightarrow \infty.
$$ 
\bigskip

The case of a characteristic function $g= \chi_\Omega$ of a bounded domain $\Omega$ is particularly relevant for our note. In this case we simply write
$E_\Omega$ instead of $E_g$, and we record the following properties (all proved and well commented in \cite{Gustafsson-Putinar-2017}).
\bigskip

1). The equation 
$$ \frac{\partial E_\Omega(w,z)}{\partial \overline{w}} = \frac{E_\Omega(w,z)}{\overline{w}-\overline{z}}$$
holds for $z \in \Omega$ and $w \in \C \setminus \Omega$;

2). The function $E_\Omega(w,z)$ extends analytically/antianalytically from \\
$(\C \setminus \overline{\Omega})^2$ across real analytic arcs of the boundary of  $\Omega$;

3).  Assume $\partial \Omega$ is piecewise smooth. Then $z \mapsto E_\Omega(z,z)$ is a superharmonic function on the complement of $\Omega$,
with value $1$ at infinity, vanishing on $\overline{\Omega}$ and satisfying
$$ E_\Omega(z,z) \approx {\rm dist} (z, \partial \Omega)$$
for $z \in \C \setminus \overline{\Omega}$ close to $\partial \Omega$.
\bigskip

The kernel $E_\Omega$ is also characterized by a Riemann-Hilbert factorization, see \cite{Gustafsson-Putinar-2017}. The feature which turns the exponential transform into a suitable shape reconstruction from moments tool is its rationality  on quadrature domains, in complete parallelism to the rationality of Cauchy transforms of point masses on the line, or the rationality of exponential of Cauchy transforms 
of union of intervals. 

We reproduce the rational reconstruction procedure. Let $d$ be a fixed integer and let
$(a_{k\ell})_{k,\ell=0}^d,$
be a non-negative matrix of potential moments of a ``shade function" $g(z), 0 \leq g \leq 1$. Consider the truncated exponential transform
$$ F(w,z) =  \exp [\frac{-1}{\pi} \sum_{k,\ell=0}^d \frac{a_{k\ell}}{w^{k+1} \overline{z}^{\ell+1}}] = $$ $$1 - \sum_{m,n=0}^\infty \frac{b_{mn}}{w^{m+1} \overline{z}^{n+1}}.$$
It is known that $(b_{mn})_{M,n=0}^d$ is also a non-negative definite matrix, subject to some additional constraints \cite{Martin-Putinar-1989}.
A necessary and sufficient condition that $(a_{k\ell})_{k,\ell=0}^d$ represent the moments of a quadrature domain of order $d$ is
$$ \det (b_{mn})_{m,n=0}^d =0,$$
or equivalently the existence of a monic polynomial $P(z)$ of degree $d$ and a rational function of the form
$$ R_d(w,z) = 1- \frac{\sum_{m,n=0}^{d-1} c_{mn} w^m \overline{z}^n}{P(w) \overline{P(z)}},$$
such that, at infinity
$$ F(w,z) - R_d(w,z) = O( \frac{1}{w^{d+1} \overline{z}^d},  \frac{1}{w^{d} \overline{z}^{d+1}}).$$
The reader will recognize above a typical 2D Pad\'e approximation scheme. Moreover, for any shade function $g$, the exponential transform
$E_g$ coincides with $E_\Omega$, where $\Omega$ is a quadrature domain if and only if 
$$E_g(w,z) = 1- \frac{\sum_{m,n=0}^{d-1} c_{mn} w^m \overline{z}^n}{P(w) \overline{P(z)}}, \ \ |z|, |w| \gg 1.$$
In this case the zeros of $P$ coincide with the quadrature nodes, while the numerator is the irreducible defining polynomial of the boundary of $\Omega$:
$$ \partial \Omega \subset \{ z \in \C; \ \sum_{m,n=0}^{d-1} c_{mn} z^m \overline{z}^n = |P(z)|^2 \}.$$

The above Pad\'e approximation procedure was proposed for the reconstruction of planar shapes in \cite{GHMP-2000}, with additional details in \cite{Gustafsson-Putinar-2017}.

\section{Examples}

\subsection{Reconstruction of a disk via its exponential transform}

As simple and well known the example below might be, it is illustrative for the two dimensional Pad\'e scheme just discussed.

A disk $B = \{ z \in \C; |z-c| <R\}$ has the exponential transform
$$E_B(z,w) = 1 - \frac{R^2}{(z-c)(\overline{z}-\overline{c})}$$
detectable from initial moments:
$$ a_{00} = \pi R^2,$$
$$ a_{01} = \int_{|z-c| \leq R} z dA(z) = \pi R c = \overline{a_{10}},$$
$$ a_{11} = \int_{|z-c| \leq R} |z|^2 dA(z) = 2\pi \int_0^R (|c|^2 + r^2) r dr = \pi R^2 |c|^2 + \pi \frac{R^4}{2}.$$
The truncated exponential transforms is:
$$ \exp [- \frac{R^2}{z \overline{w} } - \frac{R^2 \overline{c}}{z\overline{w}^2} - \frac{R^2 c}{z^2 \overline{w}} - \frac{R^2 |c|^2 + \frac{R^4}{2}}{z^2 \overline{w}^2}]  = $$ $$
 1- \frac{R^2}{z \overline{w} } - \frac{R^2 \overline{c}}{z\overline{w}^2} - \frac{R^2 c}{z^2 \overline{w}} - \frac{R^2 |c|^2}{z^2 \overline{w}^2} +
 O( \frac{1}{w^{3}},  \frac{1}{\overline{z}^{3}}).$$
 Whence
 $$ b_{00} = R^2,  \ b_{10} = R^2 c,\ b_{01} = R^2 \overline{c}, \ b_{11} = R^2 |c|^2.$$
 The vanishing determinant $b_{00}b_{11}-b_{10}b_{01} = 0$ and the linear dependence of the columns of the matrix $(b_{k\ell})_{k,\ell =0}^1$
 identify the monic factor $P(z) = z-c$ in the denominator $P(z)\overline{P(w)}$ of the rational approximant of the full exponential transform. Finally, as in the one dimensional diagonal Pad\'e approximation scheme, one finds:
 $$ (z-c)(\overline{w}-\overline{c})  [1- \frac{R^2}{z \overline{w} } - \frac{R^2 \overline{c}}{z\overline{w}^2} - \frac{R^2 c}{z^2 \overline{w}} - \frac{R^2 |c|^2}{z^2 \overline{w}^2} ] = $$ $$
 (z-c)(\overline{w}-\overline{c})  - R^2 + O(\frac{1}{z^2}, \frac{1}{\overline{w}^{2}}).$$

\subsection{Rotationally invariant measures}
Let $\rho$ denote a positive Borel measure on the interval $[0,1]$ with $1 \in {\rm supp}(\rho)$. That is
$\rho ([\delta, 1]) >0$ for every $ \delta < 1$. The induced rotationally invariant measure $\mu$ acts on smooth functions
$\phi(x,y) $ by the formula
$$ \int \phi d\mu = \int \phi (r \cos \theta, r \sin \theta) d\rho (r).$$
The support of the measure $\mu$ contains full circles, hence it is not finite.
The complex monomials are orthogonal in $L^2(\mu)$, with norms
$$ \frac{1}{\gamma_k^2} = \| z^k \|^2_{2,\mu} = \int r^{2 k} d\rho(r), \ \ k \geq 0.$$
The orthonormal polynomials are
$$ p_k(z) = \gamma_k z^k, \ \ k \geq 0,$$
so that the multiplier $S = M_z$ acts as a weighted shift:
$$ S p_k = h_{k+1,k} p_{k+1}, k \geq 0.$$
Note that  $(\frac{1}{\gamma_k^2})_{k=0}^\infty$ are the moments of a positive measure defined on $[0,1]$
(specifically $d\rho(\sqrt{r})$), and
$$  h_{k+1,k} = \frac{\gamma_{k}}{\gamma_{k+1}} > 0, \ \ k \geq 0.$$
All other entries in the associated Hessenberg matrix $H$ are equal to zero. In short,  $H$ is a subnormal weighted shift.

The spectrum of $H$ is well understood: it coincides with the closed unit disk. Moreover, the elements of $P^2(\mu)$ are
analytic functions in the open disk, subject to a growth condition imposed by the coefficients $\gamma_k$, cf. \cite{Shields-1974}.
In particular, $P^2(\mu) \neq L^2(\mu)$ and every  $\lambda \in \D$ is a bounded point evaluation for $P^2(\mu)$. That is, the cloud
of $\mu$ is the full disk: $\Sigma(\mu) = \overline{\D}$. 

We give for completeness a few details. Condition $[S^\ast, S] \geq 0$ reads
$$ \| S p_k \|^2 \geq \|S^\ast p_k \|^2,$$
or equivalently
$$ h^2_{k+1,k} \geq h^2_{k, k-1}, \ \ k \geq 1.$$
In addition
$$ \lim_k h_{k+1,k} =1.$$
Indeed, 
$$ S^\ast S p_k =  h^2_{k+1,k} p_k, \ \ k \geq 0,$$
and $ \| S^\ast S \| = \| S \|^2 = \| M \|^2 = 1.$

According to Theorem \ref{main}, the rate of convergence of the approximation scheme is dictated by the remainder:
$$  [(\sum_{j >n} s_j) + \sum_{j \leq n<N < k} |h_{jk}|^2] = 1- h_{n+1,n}^2.$$
In general, for Hausdorff moment sequences such as  $(\frac{1}{\gamma_k^2})_{k=0}^\infty$ above, the convergence rate of
consecutive quotients is known:
$$ h^2_{k+1,k} = \frac{\gamma^2_{k}}{\gamma^2_{k+1}} = 1 - O(\frac{1}{k})$$
A multi-fractal gauge, known as the local dimension of a measure, quantifies this asymptotics,
cf. \cite{Grabner-Prodinger-1996,Olsen-2016}.

The remarkable feature of this class of examples is that all (normalized) rotationally invariant measures share the same cloud.

\subsection{A uniform mass cloud plus finitely many point masses} Let $\Omega \subset \C$ be a bounded, connected and simply connected domain
with smooth boundary. The uniform area mass $\nu = \chi_\Omega dA$ distributed on $\Omega$ offers one of the best understood asymptotic analysis of complex orthogonal polynomials,
with a century old history, see for instance \cite{Suetin-1974}.
In this case $P^2(\nu)$ coincides with the {\it Bergman space} $L^2_a(\Omega)$, that is the collection of all analytic functions in $\Omega$, square summable with respect to area.The reproducing kernel
$$K^\Omega(z,w) = \sum_{j=0}^\infty p_j(z) \overline{p_j(w)} $$
converges in $\Omega \times \Omega$ to an analytic/anti-analytic positive definite kernel, known as the {\it Bergman kernel} of $\Omega$.
If $\phi : \Omega \longrightarrow \D$ denotes a conformal mapping, then
$$ K^\Omega(z,w) = \frac{\phi'(z) \overline{\phi'(w)}}{\pi (1- \phi(z) \overline{\phi(w))^2}}, \ \ z, w \in \Omega.$$
Therefore, the integral kernel representing $(I-P)M^\ast P$ is precisely
$$ L(z,w) = (\overline{z} - \overline{w}) K^\Omega(z,w).$$
The asymptotics of the singular numbers of this integral operator (known as a big Hankel operator) were thoroughly studied \cite{Arazy-1998,Janson-1988}.
In general, one carries to the unit disk all computations, via the inverse conformal mapping $\phi = \psi^{-1}$. The integral kernel
$$ L_1(u,v) = \frac{ \overline{\phi(u)}- \overline{\phi(v)}}{\pi (1- u \overline{v})^2}, \ \ u,v \in \D$$
gives rise to a unitarily equivalent integral operator to $(I-P)M^\ast P$, this time acting on $L^2(\D, dA)$. Within this framework, a theorem due to Nowak \cite{Nowak-1991} asserts that the singular numbers $\kappa_j$
of the Hankel operator $(I-P)M^\ast P$ associated to a domain $\Omega$ with {\it smooth} boundary satisfy
$ \kappa_j = O(\frac{1}{j})$.
Consequently the best approximation by a sequence of finite rank projections $Q_n, \ {\rm rank} \ Q_n = n,$ yields the error $\kappa_{n+1}^2 + \kappa_{n+2}^2 + \ldots$, that is
$$ \| (I-P) M^\ast (P-Q_n)\|_{HS}^2 = O(\frac{1}{n}).$$

We show that this estimate is sharp for an ellipse, with respect to the polynomial filtration. More precisely, let $E$ be the ellipse whose complement
is described by Joukowski's map $$F(z) = \frac{1}{2}( e^c z + \frac{1}{e^c z}), \ |z| > 1,$$ and parameter $c >0$.
The associated Hessenberg matrix is three diagonal, due to the special structure of the orthogonal polynomials with respect to area measure on $E$. Indeed,
$$ p_j(z) = 2 \sqrt{\frac{j+1}{\pi}} \frac{1}{\sqrt{\rho^{j+1} - \rho^{-j-1}}} U_j(z), \ \ j \geq 0,$$
where $\rho = e^{2c}$ and $U_j$ denotes Chebyshev polynomial of the second kind, see for details \cite{Nehari-1975} pg. 259. We adopt the convention $p_{-1} =0$ as well $U_{-1} =0$.
The three term recurrence relation for Chebyshev polynomials
$$ 2 z U_j(z) = U_{j+1}(z) + U_{j-1}(z), \ j \geq 0,$$
implies
$$ z p_j(z) = \frac{1}{2} \sqrt{\frac{j+1}{j+2}} \sqrt{ \frac{ \rho^{j+2}-\rho^{-j-2}}{\rho^{j+1} - \rho^{-j-1}}} p_{j+1} +  \frac{1}{2} \sqrt{\frac{j+1}{j}} \sqrt{ \frac{ \rho^{j}-\rho^{-j}}{\rho^{j+1} - \rho^{-j-1}}} p_{j-1}.$$
In other terms, $h_{jj}= 0$,
$$ h_{j+1,j} = \frac{1}{2} \sqrt{\frac{j+1}{j+2}} \sqrt{ \frac{ \rho^{j+2}-\rho^{-j-2}}{\rho^{j+1} - \rho^{-j-1}}} $$
and
$$ h_{j-1,j} = \frac{1}{2} \sqrt{\frac{j+1}{j}} \sqrt{ \frac{ \rho^{j}-\rho^{-j}}{\rho^{j+1} - \rho^{-j-1}}}.$$

In order to evaluate the approximation rate in Theorem (\ref{main}) we remark that  $\sum_{j \leq n<N < k} |h_{jk}|^2 =0$
whenever $N>n$, hence 
$$  \frac{1}{\pi} {\rm Area \ E} - (h_{n+1,n}^2 -h_{n,n+1}^2) $$
dictates, up to the stated constants, the rate of convergence. Since ${\rm Area \ E} = \frac{1}{4} (\rho - \rho^{-1})$ with $\rho >1$, we find the error estimate:
$$ \frac{1}{4} (\rho - \frac{1}{\rho}) - \frac{1}{4} [ \frac{n+1}{n+2} \rho \frac{1-\rho^{-2n-3}}{1-\rho^{-2n -2}} - \frac{n+2}{n+1} \frac{1}{\rho} \frac{1-\rho^{-2n-2}}{1-\rho^{-2n -3}}] = O(\frac{1}{n}).$$

Conformal and quasi-conformal mapping techniques led recently to sharp estimates of the Hessenberg matrix entries of the subnormal multiplier $S_\nu = M_z$, acting on $P^2(\nu)$, \cite{BS-2018}.
This article represents the highest point of several decades of accumulated studies, by many authors. We mention the main setting.

Let 
$$F(z) = c_{-1} z + c_0 + c_1 \frac{1}{z} + c_2 \frac{1}{z^2} + \ldots $$
denote the conformal mapping of the exterior of the closed unit disk onto $\C \setminus \overline{\Omega}$, It is customary to normalize $F$ by the condition $c_{-1} >0$. Theorem 1.2 of
\cite{BS-2018} asserts that there exists a constant $\beta \geq 1$ so that Hessenberg matrix of $S_\nu$ is asymptotically close to the Toepliz matrix $\mathcal T$ with entries $c_{-1}, c_0, c_1, \ldots $. More specifically:
$$ | h_{n-k,k} - \sqrt{\frac{n+1}{n-k+1}} c_k| = O( \frac{1}{n^\beta}), \ \ n \rightarrow \infty,$$
where $O$ depends on $k \geq -1$.
In particular, the only non-zero under-diagonal terms satisfy:
$$ |h_{n+1,n}  - \sqrt{\frac{n+1}{n+2}} c_{-1}| = O( \frac{1}{n^\beta}), \ \ n \rightarrow \infty.$$
That is
$$ |h_{n+1,n} - c_{-1}| = O(\frac{1}{n}).$$
The value of the constant $\beta$ depends on the regularity of the boundary
of $\Omega$, \cite{BS-2018}.

Remark that the series $\sum_{\ell \geq 1} \ell |c_\ell|^2$ converges via a well known area estimate, in its turn a consequence of Stokes formula: 
$$ \frac{1}{\pi}  \ {\rm Area \Omega} =  |c_{-1}|^2 - |c_1|^2 - 2 |c_2|^2 - 3 |c_3|^2 - \ldots.$$
Just for validation: this is nothing else than the trace of the self-commutator of the corresponding Toeplitz matrix $\mathcal T$. 

Recall from Theorem \ref{main} that the orthogonal projection $P_n$ onto $\C_n[z]$ satisfies the identity
$$ \| (I-P)M^\ast P_n \|^2_{HS} = h_{n+1,n}^2 - \sum_{j \leq n< k} |h_{jk}|^2.$$
 
In conclusion,
$$ \lim_n  \sum_{j \leq n< k} |h_{jk}|^2  = |c_1|^2 + 2 |c_2|^2 + 3 |c_3|^2 + \ldots .$$
The yet unknown rate of convergence in the latter limit is not expected to be better than $O(\frac{1}{n})$, as the ellipse case shows.

One of the first study of estimates of orthogonal polynomials in Bergman space setting is due to Carleman, in the case of real analytic boundaries, see for instance \cite{Suetin-1974}. Without entering into details, we mention that in this scenario there exists $\rho>1$, depending on the geometry of $\partial \Omega$ (how far Schwarz function of this curve analytically extends inside $\Omega$), such that
the decay in formula (\ref{far-corner}) is geometric:
$$ \sum_{j \leq n < N < k} |h_{jk}|^2 = O(\frac{1}{\rho^N}),$$
where $O$ depends on $n$.

In general, Corollary \ref{perturbation} allows to estimate the error term in the moment approximation formula (\ref{main}) even after adding finitely point masses outside the polynomial convex hull of $\overline{\Omega}$.

\subsection{Finite rank self-commutator} Even the simplest, finite rank self-commutator scenario raises challenging approximation theory questions. 
In view of the structural theorem of McCarthy and Yang, we have to focus in this case on a rational conformal map $r : \D \longrightarrow \Omega$ of the disk
onto a bounded quadrature domain, and the push forward measure $\mu = r_\ast ( d\theta + \nu)$, where $d\theta$ is arc length on the unit circle and
$\nu$ is a finite atomic, positive measure supported by $\D$. Then the operator $S_\mu = PM_z P$ is a cyclic, irreducible subnormal with finite rank self-commutator, and vice-versa
 \cite{McCarthy-Yang-1995,McCarthy-Yang-1997}. Let the Schmidt expansion of $(I-P)M^\ast P$ be:
 $$  (I-P)M^\ast P= \sum_{j=0}^d \kappa_j g_j \langle \cdot, f_j \rangle,$$
 with $d$ finite. Every eigenfunction $f_j, \ 0 \leq j \leq d,$ of the self-commutator $[S^\ast,S]$ annihilates a finite codimension ideal of the ring of analytic functions defined on $\Omega$
 (see \cite{McCarthy-Yang-1995}). Let $a_1, \ldots,a_p \in \Omega$ denote the support of this ideal. Therefore every $f_j$ is a linear combination of the corresponding point evaluation
 functionals $k_{a_1}, \ldots, k_{a_p}$. Due to the definition of the measure $\mu$, these point evaluation functionals are push forward via $r$ of point evaluations functionals with respect to the measure 
 $d\theta + \nu$. And the latter can be explicitly computed recursively (via the so-called Uvarov's transform). We indicate only one step of this transform, corresponding to the measure $d\theta + \delta_\alpha$
 with $\alpha \in D$:
 $$ K^{d\theta + \delta_\alpha}(z,w) = \frac{1}{2 \pi} [ \frac{1}{1-z\overline{w}} + \frac{C}{(1-z \overline{\alpha})(1-\alpha \overline{w})}], \ \ z,w \in \Omega,$$
 where $C$ is a constant.
More details can be found in \cite{Simon-OP2}.
We infer from these formulas that every evaluation functional $K^{d\theta + \nu}(z,w)$ analytically extends as a function of $z$, across $\partial \D$, for a fixed value of $w \in \D$.
The same analytic continuation feature carries to $k_{a_1}, \ldots, k_{a_p}$ provided the boundary of $\Omega$ is smooth. Recall that the only singular points in the boundary of a quadrature domain
are inner cusps \cite{Gustafsson-Shapiro-2005}. We discuss a generic situation.

\begin{proposition} Let $S$ be a cyclic subnormal operator with finite-rank self-commutator, so that its spectrum is the closure of a finite union $\Omega$ of quadrature domains, plus a finite number of points.
If the boundary of $\Omega$ is smooth, then there exists $\rho>1$, so that the error in Theorem \ref{main} satisfies:
$$ \| (I-P)M^\ast (P-P_n) \|_{HS} = O(\frac{1}{\rho^n}), \ n \rightarrow \infty.$$
\end{proposition}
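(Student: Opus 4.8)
The plan is to collapse the Hilbert--Schmidt norm into a finite sum of best polynomial approximation errors for the eigenfunctions of the self-commutator, and then to convert the analytic continuation of the reproducing kernel across the smooth (hence real-analytic) boundary $\partial\Omega$ into a geometric decay rate via Bernstein--Walsh theory. First I would record the purely algebraic reduction. Since $P_n$ projects onto $\C_n[z]\subset P^2(\mu)$ we have $PP_n=P_n$, so $(I-P)M^\ast(P-P_n)=(I-P)M^\ast P(I-P_n)=T(I-P_n)$ with $T=(I-P)M^\ast P$. Substituting the finite Schmidt expansion $T=\sum_{j=0}^d\kappa_j g_j\langle\cdot,f_j\rangle$ and using the orthonormality of the $g_j$ yields
$$\|(I-P)M^\ast(P-P_n)\|_{HS}^2=\sum_{j=0}^d\kappa_j^2\,\|(I-P_n)f_j\|^2,$$
so that, $d$ being finite, it suffices to establish $\|(I-P_n)f_j\|=O(\rho^{-n})$ for each index $j$.

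Next I would pass from the eigenfunctions to point-evaluation kernels. Each $f_j$ lies in the finite-dimensional span of the reproducing kernels $k_{a_1},\ldots,k_{a_p}$ at the nodes $a_1,\ldots,a_p\in\Omega$ of the annihilated ideal, so it is enough to bound $\|(I-P_n)k_{a_i}\|$. Now $(I-P_n)k_{a_i}$ is exactly the residual of the best $L^2(\mu)$ approximation of $k_{a_i}$ by polynomials of degree at most $n$; equivalently $\|(I-P_n)k_{a_i}\|^2=K^\mu(a_i,a_i)-K_n(a_i,a_i)$, which recasts the claim as a geometric convergence rate for the truncated Christoffel--Darboux kernels on the diagonal. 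I would bound this residual from above by any convenient polynomial approximant: since $\mathrm{supp}(\mu)\subset\overline{\Omega}$ and $\mu$ is finite, $\|(I-P_n)k_{a_i}\|\le\mu(\C)^{1/2}\inf_{\deg q\le n}\|k_{a_i}-q\|_{\infty,\overline{\Omega}}$.

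The decisive step combines analytic continuation with Bernstein--Walsh. A quadrature domain has real-analytic boundary away from possible inner cusps, and the smoothness hypothesis removes those cusps, so $\partial\Omega$ is a real-analytic Jordan curve across which the Schwarz function and the conformal map $r$ continue. As explained just before the statement, the kernels $K^{d\theta+\nu}$ extend analytically across $\partial\D$ through the Uvarov recursion and the geometric-series form of the Szeg\"o kernel, and pushing forward by $r$ transfers this to an analytic continuation of each $k_{a_i}$, as a function of $z$, to a fixed neighborhood of $\overline{\Omega}$. Classical Bernstein--Walsh theory then provides polynomials $q_n$ of degree at most $n$ with $\|k_{a_i}-q_n\|_{\infty,\overline{\Omega}}=O(\rho^{-n})$, where $\rho>1$ is controlled by the largest level set of the Green's function of $\C\setminus\overline{\Omega}$ reached by the continuation. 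Taking the worst $\rho$ over the finitely many nodes and feeding this back through the two reductions completes the argument, with the rate independent of $N>n$.

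I expect the main obstacle to be the third step: verifying that all the kernels $k_{a_i}$ genuinely continue to one common neighborhood of $\overline{\Omega}$ with a rate $\rho$ independent of $n$. This requires controlling the Uvarov transform and the boundary behavior of $r$ uniformly near $\partial\Omega$, and confirming that none of the nodes $a_i$ lies where the continuation degenerates. The finiteness of $\{a_1,\ldots,a_p\}$ together with its inclusion in the open domain $\Omega$ is precisely what makes a single uniform geometric rate available.
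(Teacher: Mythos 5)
Your proposal is correct and follows essentially the same route as the paper: reduce the Hilbert--Schmidt error via the finite Schmidt expansion to the best $L^2(\mu)$-polynomial approximation of the finitely many eigenfunctions (equivalently, the kernels $k_{a_1},\ldots,k_{a_p}$, whose analytic continuation across the smooth, hence real-analytic, boundary was established just before the statement), then invoke the Russell--Walsh/Bernstein--Walsh theorem on the polynomially convex set $\overline{\Omega}$ to get the geometric rate $O(\rho^{-n})$. Your version is in fact slightly more careful than the paper's, keeping the $\kappa_j^2$ weights in the Hilbert--Schmidt identity and making explicit the passage from the $L^2(\mu)$ residual $\|(I-P_n)k_{a_i}\|^2=K^\mu(a_i,a_i)-K_n(a_i,a_i)$ to the uniform-norm bound on $\overline{\Omega}$.
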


\begin{proof} The non-degenerate case assumed in the statement, implies that $\Omega$ is a real analytic curve without singularities.
Let $f_1, \ldots,f_p$ denote the eigenfunctions of $[S^\ast, S]$. 
We just proved that $f_1, \ldots,f_p$ are 
analytic functions defined in a neighborhood of $\overline{\Omega}$. 

The full Hilbert-Schmidt norm of the main Hankel operator is
$$ \| (I-P)M^\ast P \|^2_{HS} = \sum_{j=0}^p \| (I-P)M^\ast f_j \|^2,$$
while the error of interest is
$$ \| (I-P)M^\ast (P-P_n) \|^2_{HS} = \sum_{j=0}^p \| (I-P)M^\ast (f_j - P_n f_j)\|^2.$$
It remains to prove that there exists a constant $\rho >1$, so that, for every $j, 0 \leq j \leq p$:
$$ \inf \{ \| f_j - h \|_{2,\mu}, \  h \in \C_n[z] \} = O ( \frac{1}{\rho^n}), \ n \rightarrow \infty.$$
Since the complement of $\overline{\Omega}$ is connected, a Theorem of Russell and Walsh
insures the above decay, even with respect to uniform norm on $\overline{\Omega}$, see \cite{Walsh-1965} Section 4.7.

\end{proof}

The case of non-smooth boundary is different, and more intriguing. We consider a simple example.
Let $r(z) = (z-1)^2$ be the conformal map of the disk onto the cardiodid $\Omega$, a quadrature domain of order two.
The point $0 = r(1)$ is a singular point of $\Omega$, where the boundary has an inner cusp.

Let $U = M_z$ denote the unilateral shift, on Hardy space $H^2(\D) = P^2(\T, d\theta)$. The monomials $1, z, z^2 , \ldots$ form an orthonormal basis, with respect to normalized 
arc length measure $\frac{d \theta}{2\pi}$. The operator $S_\mu$ corresponding to the push forward measure $r_\ast \frac{d \theta}{2\pi}$ is unitarily equivalent to $(U-I)^2$.
The action of $U$ on the basis is by shifting the indices $U z^n = z^{n+1}, n \geq 0,$ with $U^\ast z^{n+1} = z^n$ and $U {\mathbf 1} = 0$. One computes immediately the self commutator,
in this Hilbert space representation:
$$ [S^\ast_\mu, S_\mu] = [ I - 2 U^\ast + U^{\ast 2}, I - 2 U + U^2] = $$ $$ 4 [U^\ast, U] - 2 [U^\ast, U^2] - 2[U^{\ast 2}, U] + [U^{\ast 2}, U^2]$$
remarking that every commutator in the last expression annihilates $z^k, \ k \geq 2$. Therefore the two eigenfunctions $f_0, f_1$ of $[S^\ast_\mu, S_\mu]$
are linear combinations of ${\mathbf 1}$ and  $z$. In view of the preceding proof, the rate of convergence in Theorem \ref{main} is a factor of
$$ \delta_n = \inf \{ \| z - h((1-z)^2)\|_{2, d\theta}, \ h \in \C_n[u] \},$$
clearly equivalent to the best polynomial approximation (in the corresponding Lebesgue space norm) of the function $\sqrt{w}$ on the boundary of $\Omega$.
Remark that the function $\sqrt{w}$ is analytic and continuous on the closure $\overline{\Omega}$ of the cardiodid, but it is not analytic in a neighborhood of it.
The same theorem of Russell and Walsh and a refined Bernstein-Markov Inequality imply that $\delta_n$ converges to zero at a slower than geometric rate. 
We do not expand here the technical details related to the cusp singularity in the boundary, see for instance \cite{Baran-1994}. The exact asymptotic decay of $\delta_n$ remains unknown to us.
It is worth mentioning that a slight perturbation in the definition of the measure $\mu$, for instance
$$ \mu_\epsilon = (r_\epsilon)_\ast d\theta, \ \ r_\epsilon(z) = (z-1-\epsilon)^2, \ \epsilon >0$$
remains within the class of rank-two self-commutator, this time with a cloud possessing a smooth boundary.

\subsection{Non-smooth clouds} Given the full generality of our approach, a vast array of pathologies enters into the picture. We simply make aware the reader of such pitfalls on
one of the simplest examples, stressing that the cloud of a measure is only contained in its {\it closed} support.

Let $\Gamma$ be a Jordan curve in the complex plane, with positive area. As for instance constructed by Osgood \cite{Osgood-1903}. Let $\Omega$ be the interior of $\Gamma$.
A celebrated theorem of Carleman, see for instance \cite{Suetin-1974}, asserts that complex polynomials are dense in the associated Bergman space. In our notation
$L^2_a(\Omega) = P^2(\chi_\Omega dA)$. Denote $\mu = \chi_\Omega dA$ and $S_\mu$ the associated subnormal operator, equal to the multiplication by $z$ on this space. The set of bounded point evaluations for $\mu$ is equal to $\Omega$, also equal to the non-essential spectrum of $S_\mu$. The spectrum of $S_\mu$ is equal to $\overline{\Omega}$. We know that the principal function $g$ of $S_\mu$ is equal to the characteristic function of $\overline{\Omega}$, modulo area null-sets. Therefore the cloud $\Sigma(\mu)$ is equal to $\overline{\Omega}$, and
$$ {\rm Area} (\Sigma(\mu) \setminus \Omega) >0,$$
while
$$ \mu (\Sigma(\mu) \setminus \Omega) =0.$$

\bibliography{CDbibl}
\bibliographystyle{plain}

\end{document}